\newcommand{\overbar}[1]{\mkern 1.5mu\overline{\mkern-1.5mu#1\mkern-1.5mu}\mkern 1.5mu}
\def\HH{\EuFrak H}
\def\R{\mathbb R}
\def\E{\mathbb E}
\def\dom{{\rm Dom}}
\newtheorem{prop}{Proposition}[section]
\newtheorem{lemma}[prop]{Lemma}
\newtheorem{theorem}[prop]{Theorem}
\newtheorem{remark}[prop]{Remark}
\numberwithin{equation}{section}
\newcommand{\norm}[1]{\left\lVert#1\right\rVert}
\begin{document}

\title{Continuous Breuer-Major Theorem for Vector Valued Fields}

\author{David Nualart}
\address{University of Kansas, U.S.A.}
\email{nualart@ku.edu}

 \author{Abhishek Tilva}
 \address{Indian Institute of Science Education and Research, Pune, India}
 \email{aktilva1@gmail.com}
  \thanks{The first author was supported by the NSF grant DMS 1811181.}
 \thanks{The second author was supported by DST-INSPIRE scholarship.}

\subjclass[2010]{60F05, 60F17, 60G15, 60G60, 60H07}



\keywords{Breuer-Major theorem, functional limit theorem, Wiener chaos expansions}

\begin{abstract}

Let $\xi : \Omega \times \mathbb{R}^n \to \mathbb{R}$ be zero mean, mean-square continuous, stationary,  Gaussian random field with covariance function $r(x) = \mathbb{E}[\xi(0)\xi(x)]$ and let $G : \mathbb{R} \to \mathbb{R}$ such that $G$ is square integrable with respect to the standard Gaussian measure and is of Hermite rank $d$. The Breuer-Major theorem in it's continuous setting gives that, if $r \in L^d(\mathbb{R}^n)$, then the finite dimensional distributions of $Z_s(t) = \frac{1}{(2s)^{n/2}} \int_{[-st^{1/n},st^{1/n}]^n} \Big[G(\xi(x)) - \mathbb{E}[G(\xi(x))]\Big]dx$ converge to that of a scaled Brownian motion as $s \to \infty$. Here we give a proof for the case when $\xi : \Omega \times \mathbb{R}^n \to \mathbb{R}^m$ is a random vector field. We also give a proof for the functional convergence in $C([0,\infty))$ of $Z_s$ to hold under the condition that for some $p>2$, $G\in L^p(\R^m, \gamma_m)$ where $\gamma_m$ denotes the standard Gaussian measure on $\R^m$ and we derive expressions for the asymptotic variance of the second chaos component in the Wiener chaos expansion of   $Z_s(1)$.
\end{abstract}

\maketitle

\section{Introduction}

The classical Breuer-Major theorem in its primitive form, as proved first by P\'eter Breuer and P\'eter Major in their seminal paper \cite{breuer_central_1983} in 1983, states that, under an appropriate condition involving the covariances, the sum of a functional of a stationary sequence of Gaussian variables, scaled by  the square root of the number of terms, converges in distribution to a Gaussian variable.  A formal statement is as follows.
For a centered stationary sequence of Gaussian variables $\{\xi_k : k \in \mathbb{Z}\}$ with unit variance and a function $G \in L^2(\mathbb{R}, \gamma_1)$ of Hermite rank $d$, where $\gamma_1$ denotes the standard Gaussain measure on $\mathbb{R}$,  if $\sum_{k \in \mathbb{Z}} |\mathbb{E}[\xi_1\xi_{1+k}]|^d < \infty$, then the following convergence in law holds
\[
\frac{1}{\sqrt{n}} \Big[\sum_{k=1}^n G(\xi_k) - n\mathbb{E}[G(\xi_1)]\Big] \Rightarrow \mathcal{N}(0,V)\] 
as $n \to \infty$, for some $V \in [0, \infty)$.

The theorem has now become one of the most celebrated and widely applicable results in stochastic analysis. An extension of the original version to sequences of vectors was done by Arcones in \cite{arcones_limit_1994} and continuous versions of the theorem for real valued fields are found in \cite{darses_limit_2010,hariz_limit_2002,ivanov_statistical_1989}.

A continuous version of this theorem  (see Theorem 2.3.1 of \cite{ivanov_statistical_1989})  asserts that for a zero mean, stationary, isotropic Gaussian random field $\xi : \Omega \times \mathbb{R}^n \to \mathbb{R}$ with covariance function $r(x) = \mathbb{E}[\xi(0)\xi(x)]$, if $r \in L^d(\mathbb{R}^n)$ and $r(x) \to 0$ as $|x| \to \infty$, then as $s \to \infty$, the finite dimensional distributions of the processes
\[ 
Z_s(t) = \frac{1}{s^{n/2}} \int_{B_n(st^{1/n})} \Big[G(\xi(x)) - \mathbb{E}[G(\xi(x))]\Big]dx, \; \; t \in [0, \infty)
\]
converge to those of a scaled Brownian motion. Here $B_n(a)$ denotes the  ball of radius $a$ centered at the origin in $\mathbb{R}^n$.

Estrade and Le\'on in \cite{estrade_central_2016} have partially addressed the case of random vector fields on  the Euclidean space when they mention adapting the Breuer-Major theorem to prove a Central Limit Theorem for  the Euler characteristic of an excursion set (see Proposition 2.4 of \cite{estrade_central_2016}).

The purpose of this paper is to  obtain a mutidimensional extension of the  continuous Breuer-Major theorem for random fields, including the corresponding invariance principle.  We will use the $n$-cubes $[-s,s]^n$ instead of balls as expanding sets and we prove it without the assumption of isotropy. We will also give a proof for the convergence of $Z_s$ to hold in a functional sense, i.e. convergence in law in $C([0, \infty))$ under the condition that $G \in L^p(\R^m, \gamma_m)$ for some $p > 2$, where $\gamma_m$ denotes the standard normal distribution on $\R^m$.  This remains an unaddressed question in the literature in the case of vectors. The approach here is similar to the method that has been employed in \cite{campese_continuous_2018} and \cite{nourdin_functional_2018}, namely  using the  representation by means of the Malliavin divergence operator, which is obtained through a  shift operator, and applying Meyer inequalities to show tightness.  However,  in the case of vectors fields,  this approach is  more involved   and  requires the introduction of weighted shift operators.

The modern proof of the Breuer-Major theorem is based on the Stein-Malliavin approach and is presented in \cite{nourdin_normal_2012}. We will rely on this methodology for the proofs. We refer the reader to  the monographs \cite{nourdin_normal_2012} or \cite{nualart_malliavin_2006} for unexplained usage of terms.
 
The organization of the paper is as follows. Section 2 describes the necessary framework and notations.  The third section contains the statements of our results. In  Section 4  we briefly describe several preliminary results and definitions regarding Malliavin calculus on Wiener space and we write the Wiener chaos expansions of variables of interest. Finally, Section 5 contains the proofs.

\section{Setup}

Let $\xi_i : \Omega \times \mathbb{R}^n \to \mathbb{R}$, $i = 1,\dots,m$ be zero mean, mean-square continuous, stationary Gaussian random fields which are \emph{jointly stationary}, i.e., for $1 \leq i,j \leq m$, the cross covariance functions, $r_{i,j}(x,y) = \mathbb{E}[(\xi_i(x)\xi_j(y)] = r_{i,j}(x-y)$ (in an abuse of notation), depend only on $x-y$. Then the function $r : \mathbb{R}^n \to M_m(\mathbb{R})$, $r(x) = (r_{i,j}(x))_{1\leq i,j \leq m}$ is the covariance function for the vector valued field, 
\[
\xi : \Omega \times \mathbb{R}^n \to \mathbb{R}^m, \; \; \xi(x) = (\xi_i(x))_{1\leq i \leq m}.
\]

We now recall the Hermite polynomials in the multivariate case. We denote the $n$-th Hermite polynomial by 
\begin{equation} \label{hermite}
H_n(x) = (-1)^n \; e^{(x^2/2)}\frac{d^n}{dx^n}e^{(-x^2/2)}.
\end{equation}
For any multi-index $a = (a_1,\dots,a_m), a_i \in \mathbb{N} \cup \{0\}$, we write $|a| = \sum_{i=1}^m a_i$, $a! = \prod_{i=1}^m a_i!$ and
\begin{equation} \label{hermite2}
\overbar{H}_a(x) = \prod_{i=i}^m H_{a_i}(x_i).
\end{equation}
We then have that $\{\frac{1}{\sqrt{a!}}\overbar{H}_a : a \; \text{is a multi-index}\}$ is an orthonormal basis of $L^2(\mathbb{R}^m, \gamma_m)$, where $\gamma_m$ denotes the standard Gaussian measure on $\mathbb{R}^m$ (see \cite{bogachev_gaussian_1998}). 

Let $G : \mathbb{R}^m \to \mathbb{R}$ be such that $G$ is not a constant and $G \in L^2(\mathbb{R}^m, \gamma_m)$. Denoting, $\mathcal{I}_q = \{a \in \mathbb{Z}^m : a_i \geq 0, |a|=q\}$, we have the following expansion of $G$ where the convergence of the series is in $L^2$ sense,
\begin{equation}
\sum_{q=0}^{\infty} \sum_{a \in \mathcal{I}_q} c(G,a) \overbar{H}_a(x) = G(x).
\end{equation}
In above expansion $c(G,a) = \frac{1}{a!}\int_{\mathbb{R}^m} G(x) \overbar{H}_a(x) \gamma_m(dx)$. Let $G_0 = \int_{\mathbb{R}^m} G(x) \gamma_m(dx) = 0$ and call an integer $d \in \mathbb{N}$ to be the \emph{rank} of $G$ if there exist a multi-index $a$ such that $|a| = d$ and $c(G,a) \neq 0$. Therefore, 
\begin{equation} \label{Gexp}
\sum_{q=d}^{\infty} \sum_{a \in \mathcal{I}_q} c(G,a) \overbar{H}_a(x) = G(x).
\end{equation}
For any integer $q\ge 1$, we will make use of the notation
\begin{equation}  \label{Gq}
G_q(x) = \sum_{a \in \mathcal{I}_q} c(G,a) \overbar{H}_a(x).
\end{equation}

We are interested in the asymptotic behavior as $s\rightarrow  \infty$ of the random variables defined by
\begin{equation}  \label{Ls}
L_s = \frac{1}{(2s)^{n/2}}\int_{[-s,s]^n}G(\xi(x))dx.
\end{equation}
For any integer $q\ge 1$, we put
\begin{equation}  \label{Lsq}
L_s^{(q)} = \frac{1}{(2s)^{n/2}}\int_{[-s,s]^n}G_q(\xi(x))dx.
\end{equation}
Also we denote the variances  of $L_s$ and $L^{(q)}_s$  by Var$(L_s) = V_s$ and  Var$(L_s^{(q)}) = V_s^{(q)}$, respectively.
Set
\[
 C_G(x,y) = \mathbb{E}[G(\xi(x))G(\xi(y))]
 \]
  as the covariance function of $G(\xi(x))$. We ignore the degenerate case when $V_s = 0$ for all $s > 0$.

\begin{remark}
We will use Fubini-Tonelli's theorem to exchange integrals and expectation and everytime its use will be justified by Theorem 1.1.1 of \cite{ivanov_statistical_1989}. We will also use it to interchange the multiple Wiener-It\^o integral and Lebesgue integral.
\end{remark}

We will impose the following condition on the covariances. As noted in the proof of Theorem 1 of \cite{arcones_limit_1994}, given that $r(0)$ is invertible, by a linear transformation we can assume that $r(0) = $Id$_{m \times m}$ ($m \times m$ identity matrix).  Moreover,  recall that $d\ge 1$ is the Hermite rank of our functional $G$.

\hfill

\textbf{Condition (C1).} $r(0) = $Id$_{m \times m}$ and for every $1 \leq j,k \leq m$,   $r_{j,k} \in L^d(\mathbb{R}^n)$.

\begin{remark}
Since by Cauchy-Schwarz inequality and stationarity, $\mathbb{E}[\xi_j(x)\xi_k(0)] \leq 1$, \textbf{(C1)} implies that $r_{j,k} \in L^p(\mathbb{R}^n)$ for all $p \geq d$.
\end{remark}

\hfill

\section{Statements}

We are now in a position to state the main results of this paper. The lemma below provides a simple characterization for the asymptotic variance of $L_s$ defined in (\ref{Ls}). Note here that we have assumed $\mathbb{E}[G(\xi(0))] = 0$, that means the Hermite rank of $G$ is $d\ge 1$.

\begin{lemma}
Under \textbf{(C1)}, the random field $G \circ \xi : \Omega \times \mathbb{R}^n \to \mathbb{R}$ is weakly stationary, i.e. $C_G(x,y) = \mathbb{E}[G(\xi(x))G(\xi(y))] = C_G(x-y)$ is a function of $x-y$ and $C_G \in L^1(\mathbb{R}^n)$. The following also holds,
\begin{equation} \label{ecu1a}
 V := \underset{s \to \infty}{\lim} V_s = \int_{\mathbb{R}^n}C_G(x)dx < \infty,
\end{equation}
where we recall that $V_s$ denoted  the variance of the random variable  $L_s$ defined in (\ref{Ls}). 
\end{lemma}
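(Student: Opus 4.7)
The plan is to split the argument into four steps. First I would establish weak stationarity: by the joint (strict) stationarity of $\xi$, for any $x,y \in \mathbb{R}^n$ the pair $(\xi(x), \xi(y))$ has the same law as $(\xi(x-y), \xi(0))$, so $\mathbb{E}[G(\xi(x))G(\xi(y))] = \mathbb{E}[G(\xi(x-y))G(\xi(0))]$ depends only on $x-y$. Write $C_G(z) := \mathbb{E}[G(\xi(z))G(\xi(0))]$.

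The heart of the argument is a pointwise bound on $|C_G(z)|$. Using the Hermite expansion (\ref{Gexp}) and the notation (\ref{Gq}), the orthogonality of distinct Wiener chaos gives
$$C_G(z) = \sum_{q \geq d} \mathbb{E}\bigl[G_q(\xi(z))G_q(\xi(0))\bigr].$$
The key tool is Lemma 1 of \cite{arcones_limit_1994}: since $r(0) = \mathrm{Id}_m$ by \textbf{(C1)} and $G$ has multivariate Hermite rank $d$, applying that lemma to the pair $(\xi(z), \xi(0))$ yields a bound of the form
$$|C_G(z)| \leq C_m \, \rho(z)^d \, \|G\|_{L^2(\mathbb{R}^m, \gamma_m)}^2,$$
where $\rho(z) = \max_{1 \leq i,j \leq m}|r_{ij}(z)|$ and $C_m$ depends only on $m$ (absorbing the equivalence between whichever matrix norm appears in Arcones' statement and the entrywise maximum, which costs only a constant since $m$ is fixed). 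Since $\rho(z)^d \leq \sum_{i,j=1}^m |r_{ij}(z)|^d$ and each $r_{ij} \in L^d(\mathbb{R}^n)$ by \textbf{(C1)}, it follows that $C_G \in L^1(\mathbb{R}^n)$.

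Third, I would compute $V_s$ in closed form. By Fubini and weak stationarity,
$$V_s = \frac{1}{(2s)^n}\int_{[-s,s]^n}\int_{[-s,s]^n} C_G(x-y)\, dx\, dy,$$
and the change of variables $w = x$, $z = x-y$ (with unit Jacobian) reduces this to
$$V_s = \int_{\mathbb{R}^n} C_G(z) \prod_{i=1}^n \Bigl(1 - \tfrac{|z_i|}{2s}\Bigr)_{+} dz.$$
As $s \to \infty$ the integrand converges pointwise to $C_G(z)$ and is dominated in absolute value by $|C_G(z)| \in L^1(\mathbb{R}^n)$, so dominated convergence delivers $V_s \to \int_{\mathbb{R}^n} C_G(z)\, dz =: V < \infty$, which is (\ref{ecu1a}).

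The only delicate step is the second one: one must invoke the multivariate Hermite-rank inequality in the correct form and verify that whichever norm of $r(z)$ appears in it is controlled by $\sum_{i,j}|r_{ij}(z)|^d$, which \textbf{(C1)} places in $L^1(\mathbb{R}^n)$. Once the pointwise bound on $|C_G(z)|$ is in hand, the remaining steps are routine applications of Fubini's theorem and dominated convergence.
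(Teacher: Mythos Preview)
Your approach coincides with the paper's: Arcones' Lemma~1 for the pointwise control of $|C_G|$, the change of variables reducing $V_s$ to $\int C_G(z)\prod_i(1-|z_i|/2s)_+\,dz$, and dominated convergence. Your stationarity argument (via strict stationarity of $\xi$) is in fact cleaner than the paper's, which goes through the chaos decomposition and the diagram formula.

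There is one point where your justification is not quite right. Arcones' Lemma~1 does not assert a bound in terms of ``some matrix norm'' that you can then swap for $\rho(z)=\max_{i,j}|r_{ij}(z)|$ by equivalence of norms. It asserts
\[
|C_G(z)|\le \psi(z)^d\|G\|_{L^2(\gamma_m)}^2,\qquad \psi(z)=\Big(\max_i\sum_j|r_{ij}(z)|\Big)\vee\Big(\max_j\sum_i|r_{ij}(z)|\Big),
\]
\emph{under the hypothesis} $\psi(z)\le 1$. Since $\psi(z)$ can be as large as $m$, this hypothesis is not automatic, and the issue is not one of norm equivalence but of a genuine restriction on the applicability of the lemma. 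The paper resolves this by splitting $\mathbb{R}^n$ into $\{\psi\le 1\}$, where Arcones applies, and $\{\psi>1\}$, where one uses the crude bound $|C_G(z)|\le\|G\|_{L^2(\gamma_m)}^2$ together with the fact that $\{\psi>1\}\subset\bigcup_{i,j}\{|r_{ij}|>1/m\}$ has finite measure (each $r_{ij}\in L^d$). Your asserted global inequality $|C_G(z)|\le C_m\,\rho(z)^d\|G\|^2$ is in fact true with $C_m=m^d$ (combine the two cases, noting $\psi\le m\rho$ and $\rho\le 1$), but the reason is this case split, not norm equivalence. Once this is corrected, your proof is complete and essentially identical to the paper's.
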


\begin{theorem}
Under \textbf{(C1)}, 
\[
L_s = \frac{1}{(2s)^{n/2}} \int_{[-s,s]^n} G(\xi(x))dx \; \; \Rightarrow \; \; \mathcal{N}(0,V) \; \; as \; \; s \to \infty.
\]
Here $V$ is as in Lemma 3.1 and $\Rightarrow$ denotes convergence in law.

\end{theorem}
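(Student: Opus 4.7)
The plan is to apply the Stein-Malliavin / Wiener chaos methodology. Using \eqref{Gexp} and \eqref{Lsq}, write $L_s = \sum_{q=d}^{\infty} L_s^{(q)}$, where the summands are pairwise orthogonal because they belong to distinct Wiener chaoses of the Gaussian space generated by $\xi$. Applying Lemma 3.1 to $G$ and separately to each $G_q$ yields $V_s \to V$ and $V_s^{(q)} \to V^{(q)} := \int_{\mathbb{R}^n} C_{G_q}(x)\,dx$, and by chaos orthogonality $V = \sum_{q \geq d} V^{(q)}$.

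The first step is a uniform tail estimate: $\sup_{s \geq 1} \sum_{q > N} V_s^{(q)} \to 0$ as $N \to \infty$. Since each $V_s^{(q)}$ is nonnegative and converges to $V^{(q)}$, while $\sum_{q \geq d} V_s^{(q)} = V_s \to V = \sum_{q \geq d} V^{(q)} < \infty$, this follows from a Scheff\'e-type argument. Once such a tail bound is in hand, convergence of $L_s$ reduces to joint convergence of the finitely many low-order chaos components $(L_s^{(d)}, \ldots, L_s^{(N)})$ plus a standard triangle-inequality truncation argument.

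The heart of the proof is then to show that for each fixed $q \geq d$, $L_s^{(q)} \Rightarrow \mathcal{N}(0, V^{(q)})$ via the Fourth Moment Theorem of Nualart-Peccati. The variable $L_s^{(q)}$ is a finite linear combination, over multi-indices $a \in \mathcal{I}_q$, of multiple Wiener-It\^o integrals of order $q$: for each $a$, $\overbar{H}_a(\xi(x))$ lies in the $q$-th chaos, and integrating in $x \in [-s,s]^n$ produces a symmetric kernel $f_{s,a} \in \mathfrak{H}^{\otimes q}$, where $\mathfrak{H}$ is the Hilbert space underlying $\xi$. Given the convergence $V_s^{(q)} \to V^{(q)}$, the task reduces to proving $\|f_{s,a} \otimes_r f_{s,b}\|_{\mathfrak{H}^{\otimes 2(q-r)}} \to 0$ for every $1 \leq r \leq q-1$ and every $a, b \in \mathcal{I}_q$. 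Expanding these contraction norms yields integrals over $[-s,s]^{2n}$ of products of covariance entries $r_{i,j}(x-y)$, which will be controlled using the bound $|r_{i,j}| \leq 1$ (from Cauchy-Schwarz and $r(0) = \mathrm{Id}_{m\times m}$) together with H\"older's inequality and $r_{i,j} \in L^d(\mathbb{R}^n)$ from \textbf{(C1)}. Joint convergence across chaos orders is then handled by the Peccati-Tudor theorem, since the marginals converge and distinct chaoses are orthogonal.

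The principal obstacle is the contraction estimate in the vector-valued setting. In the scalar case one invokes $\int |r|^q \leq \|r\|_\infty^{q-d} \|r\|_d^d$ via a single H\"older step; here, the contraction norm becomes a multi-dimensional integral of products $\prod_k r_{i_k, j_k}(\cdot)$ indexed by a combinatorial pairing that arises from how the copies of $\xi_{i_1}, \ldots, \xi_{i_q}$ inside $\overbar{H}_a(\xi(x))$ get matched against those inside $\overbar{H}_b(\xi(y))$ when the contraction of order $r$ is applied. The H\"older argument must therefore be carried out entry-by-entry across all $m^2$ covariance functions, using the decorrelation at the origin (from $r(0) = \mathrm{Id}_{m\times m}$) to separate the bound $\|r_{i,j}\|_\infty \leq 1$ from the summable factor $\|r_{i,j}\|_d^d$. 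Executing this bookkeeping for every pair $(a,b)$ and every contraction order $r$ is what turns condition \textbf{(C1)} into the vanishing of the contractions, and hence into the desired CLT.
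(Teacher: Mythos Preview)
Your plan is correct and matches the paper's proof: the paper invokes the Hu--Nualart criterion (Theorem 6.3.1 in \cite{nourdin_normal_2012}), which packages exactly the four ingredients you list---convergence $V_s^{(q)} \to V^{(q)}$, summability $\sum_q V^{(q)}=V$, vanishing of the contractions $\|\chi_s^q\otimes_b\chi_s^q\|_{\mathfrak{H}^{\otimes(2q-2b)}}\to 0$, and the uniform tail bound $\sup_s\sum_{q>N}V_s^{(q)}\to 0$---and handles the contraction estimate by expanding the squared norm as a four-fold integral over $[-s,s]^{4n}$ (not $[-s,s]^{2n}$), bounding products of the $r_{i,j}$ by the function $\psi$ of \eqref{psi}, and applying H\"older, a change of variables, and a two-region splitting. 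The only minor difference is that you obtain the tail bound via a Scheff\'e-type argument whereas the paper claims the direct inequality $V_s^{(q)}\le V^{(q)}$.
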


The above statement is a continuous version of Theorem 4 of \cite{arcones_limit_1994}.

\begin{theorem}
Under \textbf{(C1)} as $s \to \infty$, the finite dimensional distributions of the process
\[
Z_{s,y} = \frac{1}{(2s)^{n/2}} \int_{[-sy^{1/n},sy^{1/n}]^n} G(\xi(x)) dx, \; \; \; y \in [0,\infty),
\]
converge to those of  $\sqrt{V} B_y$ on $[0,\infty)$, where $B=\{B_y, y\ge 0\}$ is a standard Brownian motion. 
\end{theorem}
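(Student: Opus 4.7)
The plan is to use the Cram\'er-Wold device: for any $k \geq 1$, $0 = y_0 < y_1 < \cdots < y_k$, and $\lambda_1,\dots,\lambda_k \in \mathbb{R}$, I would show that
\[
S_s := \sum_{i=1}^k \lambda_i Z_{s,y_i} \;\Longrightarrow\; \mathcal{N}(0,\sigma^2), \qquad \sigma^2 := V\sum_{i,j=1}^k \lambda_i\lambda_j (y_i \wedge y_j).
\]
Introducing the disjoint cube shells $A_j^s := [-sy_j^{1/n}, sy_j^{1/n}]^n \setminus [-sy_{j-1}^{1/n}, sy_{j-1}^{1/n}]^n$ and the telescoping increments $\Delta_j^s := (2s)^{-n/2}\int_{A_j^s} G(\xi(x))\,dx$, I would write $Z_{s,y_i} = \sum_{j \leq i} \Delta_j^s$ and exchange the order of summation to obtain
\[
S_s = \sum_{j=1}^k \mu_j \Delta_j^s = \frac{1}{(2s)^{n/2}} \int_{\mathbb{R}^n} f_s(x) G(\xi(x))\,dx, \qquad \mu_j := \sum_{i \geq j}\lambda_i,
\]
where $f_s := \sum_j \mu_j \mathbf{1}_{A_j^s}$ is a uniformly bounded step function supported on $[-sy_k^{1/n}, sy_k^{1/n}]^n$.

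First I would verify that $\mathrm{Var}(S_s) \to \sigma^2$. By a change of variables,
\[
\mathrm{Var}(S_s) = \frac{1}{(2s)^n}\int_{\mathbb{R}^n} C_G(u) \sum_{i,j=1}^k \mu_i\mu_j \bigl|A_i^s \cap (A_j^s + u)\bigr|\,du.
\]
For fixed $u$, the disjointness of the shells together with the fact that translation by $u$ alters only an $O(s^{n-1})$ boundary yields $(2s)^{-n}|A_i^s \cap (A_j^s + u)| \to \tau_j\,\delta_{ij}$ as $s \to \infty$, with $\tau_j := y_j - y_{j-1}$. The integrand is dominated by $(\max_j|\mu_j|)^2 \bigl(\sum_j \tau_j\bigr)|C_G(u)|$, and Lemma 3.1 provides $C_G \in L^1(\mathbb{R}^n)$, so dominated convergence yields the limit $V \sum_j \mu_j^2 \tau_j$, which matches $\sigma^2$ via $y_i \wedge y_j = \sum_{\ell \leq i \wedge j} \tau_\ell$.

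Next I would establish the Gaussian limit of $S_s$ by mimicking the proof of Theorem 3.2, but with the weight $f_s$ in place of $\mathbf{1}_{[-s,s]^n}$. The decomposition \eqref{Gexp} gives $S_s = \sum_{q \geq d} S_s^{(q)}$ with each $S_s^{(q)}$ in the $q$-th Wiener chaos, and since $\|f_s\|_\infty \leq \max_j |\mu_j|$ the variance estimates used to truncate the chaos tail in Theorem 3.2 carry over verbatim. For each fixed $q$, the Nualart-Peccati fourth moment criterion reduces the Gaussian convergence of $S_s^{(q)}$ to the vanishing of certain contraction norms of its symmetric kernel, and these are controlled by the analogous contractions for $L_s^{(q)}$ times a constant depending only on $\max_j |\mu_j|$. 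A standard Slutsky-type argument in $q$, together with the variance convergence above, then gives $S_s \Rightarrow \mathcal{N}(0,\sigma^2)$ and hence the claimed finite-dimensional convergence.

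The part I expect to be most delicate is the transfer of the contraction estimates from $L_s^{(q)}$ to $S_s^{(q)}$: although $f_s$ is uniformly bounded, its support is a union of cube shells with shared boundaries rather than a single cube, so one must check that the cross-contributions from distinct shells introduce only lower-order boundary effects after integration against products of covariance functions. Since $r_{j,k} \in L^d(\mathbb{R}^n)$ by \textbf{(C1)} and $C_G \in L^1(\mathbb{R}^n)$ by Lemma 3.1, these cross-terms should be controlled by the same $L^d$/$L^1$ bounds that drive Theorem 3.2, so no new analytic ingredient beyond careful book-keeping is required.
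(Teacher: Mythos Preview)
Your argument is correct, but it takes a different and somewhat longer route than the paper. The paper does not use Cram\'er--Wold at all: instead it invokes the Peccati--Tudor multivariate fourth moment theorem (via Theorem~2.1 of \cite{campese_continuous_2018}), which says that joint Gaussian convergence of a vector with Wiener chaos expansion follows once (i) each component satisfies the fourth moment/contraction conditions marginally, and (ii) the covariances of each chaos projection converge. Condition~(i) is already established by Theorem~3.2 applied at each fixed $y$, so the paper only needs to compute $\E[Z_{s,y_1}^{(q)}Z_{s,y_2}^{(q)}] \to V^{(q)}(y_1\wedge y_2)$, which it does directly by the change of variables $(x,y)\mapsto (x-y,y)$ and dominated convergence. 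This is considerably shorter than redoing the chaos-truncation and contraction estimates for every linear combination $S_s$.

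Your approach has the merit of being more self-contained: it avoids the Peccati--Tudor machinery and stays within the one-dimensional Nualart--Peccati criterion used in Theorem~3.2. One remark on the part you flagged as delicate: your worry about ``cross-contributions from distinct shells'' in the contraction estimate is unfounded. Once you bound $|f_s|\le \max_j|\mu_j|$ pointwise, the contraction norm $\|\chi \otimes_b \chi\|^2$ for the kernel of $S_s^{(q)}$ is dominated by $(\max_j|\mu_j|)^4$ times the integral
\[
(2s)^{-2n}\int_{[-sy_k^{1/n},\,sy_k^{1/n}]^{4n}} \psi^b(x-y)\psi^b(z-w)\psi^{q-b}(x-z)\psi^{q-b}(y-w)\,dx\,dy\,dz\,dw,
\]
which is exactly the quantity handled in Theorem~3.2 with $s$ replaced by $sy_k^{1/n}$. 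The shell structure of $f_s$ never enters; no separate book-keeping of boundary effects is needed.
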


The above statement is a multi-dimensional extension of Theorem 2.3.1 of \cite{ivanov_statistical_1989}. The above two theorems are presented separately for better elucidation and to save on unnecessary notation. Clearly Theorem 3.3 contains Theorem 3.2.  

\begin{theorem}
Assume  \textbf{(C1)} and $G \in L^p(\mathbb{R}^m, \gamma_m)$ for some $p > 2$. As $s \to \infty$, the probability measures $\{P_s : s>0\}$ on $C([0, \infty))$ induced by $\{Z_s:s>0\}$ (as defined in Theorem 3.3) converge weakly to the probability measure induced by   $\sqrt{V}B_y$ on $C([0, \infty))$, where again $B$ denotes a standard Brownian motion.
\end{theorem}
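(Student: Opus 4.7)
The plan is to combine the finite-dimensional convergence from Theorem 3.3 with a tightness argument for $\{P_s\}_{s>0}$ on $C([0,\infty))$. Since tightness on $C([0,\infty))$ reduces to tightness on $C([0,T])$ for every fixed $T>0$, I would fix $T$ and aim for a Kolmogorov--Chentsov type moment estimate
\begin{equation*}
\E\bigl[|Z_s(y_2) - Z_s(y_1)|^p\bigr] \le C\,(y_2-y_1)^{p/2},
\end{equation*}
for some $p > 2$, uniformly in $s > 0$ and $0 \le y_1 \le y_2 \le T$. Since $p/2 > 1$ and $Z_s(0) = 0$, this delivers tightness on every $C([0,T])$ by the standard criterion, hence on $C([0,\infty))$, which together with Theorem 3.3 yields the claimed weak convergence.

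To reach this bound I would first split $G$ via the chaos expansion (\ref{Gexp}) as $G = G^{\le N} + G^{>N}$, where $G^{\le N} = \sum_{q=d}^{N} G_q$, inducing a decomposition $Z_s = Z_s^{\le N} + Z_s^{>N}$. The tail is controlled by Gaussian hypercontractivity together with the assumption $G \in L^p(\R^m,\gamma_m)$: the $L^p$ norm of the tail increment $Z_s^{>N}(y_2) - Z_s^{>N}(y_1)$ should be made uniformly small in $s$ by taking $N$ large, reducing the problem to a finite sum over chaoses. For each fixed $q$, following the strategy of \cite{campese_continuous_2018} and \cite{nourdin_functional_2018}, one writes the increment as a Skorohod integral
\begin{equation*}
Z_s^{(q)}(y_2) - Z_s^{(q)}(y_1) = \delta^q\bigl(u_{s,y_1,y_2}^{(q)}\bigr),
\end{equation*}
and applies Meyer's inequality
\begin{equation*}
\bigl\|\delta^q(u)\bigr\|_{L^p(\Omega)} \le c_{p,q}\sum_{j=0}^{q}\bigl\|D^j u\bigr\|_{L^p(\Omega;\,H^{\otimes(q+j)})},
\end{equation*}
turning the desired moment bound into estimates on the Malliavin derivatives of $u_{s,y_1,y_2}^{(q)}$. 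These reduce to Hilbert--Schmidt-type integrals of products of the covariance entries $r_{i,j}$ over the shell $[-s y_2^{1/n}, s y_2^{1/n}]^n \setminus [-s y_1^{1/n}, s y_1^{1/n}]^n$, and the $L^d$ integrability from \textbf{(C1)} should yield the required $(y_2 - y_1)^{p/2}$ scaling.

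The main obstacle, as emphasized in the introduction, is the construction of $u_{s,y_1,y_2}^{(q)}$ itself. In the scalar case one uses $H_q(\xi(x)) = \delta^q(f_x^{\otimes q})$ for an appropriate $f_x \in H$, but in the vector-valued setting the product Hermite polynomial $\overbar{H}_a(\xi(x)) = \prod_{i=1}^{m} H_{a_i}(\xi_i(x))$ contains a separate slot for each coordinate, and the shift must distribute itself across the $m$ components in accordance with the multi-index $a$. This is the weighted shift operator referred to by the authors: one attaches weights proportional to $a_i/|a|$ (or a suitably symmetrized variant) to the $i$-th component so that iterated divergence recovers $\overbar{H}_a(\xi(x))$ correctly. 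Once the operator is in place, the Malliavin derivatives of $u_{s,y_1,y_2}^{(q)}$ expand as sums over multi-indices $a \in \mathcal{I}_q$ and components $i$, each term being controlled by an $L^d$ estimate on the appropriate $r_{i,j}$. The delicate point is to keep the Meyer constants $c_{p,q}$ and the multi-index bookkeeping under control so that the partial sum up to order $N$ can be combined with the hypercontractivity estimate on the tail into a single Kolmogorov-type bound with constants independent of both $s$ and $N$.
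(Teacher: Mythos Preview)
Your overall framework matches the paper: finite-dimensional convergence from Theorem 3.3 plus tightness via a Kolmogorov moment bound $\|Z_s(y_2)-Z_s(y_1)\|_{L^p(\Omega)}\le C_T|y_2-y_1|^{1/2}$ uniform in $s$, obtained through a divergence representation and Meyer's inequality. You also correctly identify the weights $a_i/q$ in the shift operator.

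However, the chaos truncation $G=G^{\le N}+G^{>N}$ is not what the paper does, and as you outline it the tail step has a real gap. Hypercontractivity compares $L^p$ and $L^2$ only on a \emph{fixed} chaos, with constant $(p-1)^{q/2}$; it says nothing useful for an infinite tail $\sum_{q>N}$. A direct Minkowski bound gives $\|Z_s^{>N}(y_2)-Z_s^{>N}(y_1)\|_{L^p}\le (2s)^{n/2}(y_2-y_1)\|G^{>N}(\xi(0))\|_{L^p}$, which blows up in $s$ and has the wrong power of $y_2-y_1$; without the $(y_2-y_1)^{1/2}$ scaling on the tail you cannot assemble a Kolmogorov bound. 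On the finite side, writing each chaos as $\delta^q(u^{(q)})$ produces Meyer constants $c_{p,q}$ growing with $q$, so the ``delicate point'' you flag is in fact fatal for this route.

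The paper sidesteps both issues by representing the \emph{entire} $G(\xi(x))$ as a single $d$-fold divergence, where $d$ is the Hermite rank. Iterating $F=\delta(-DL^{-1}F)$ yields
\[
G(\xi(x))=\delta^d\Bigl(\sum_{i_1,\dots,i_d=1}^m T_{i_1,\dots,i_d}G(\xi(x))\,\beta_{i_1,x}\otimes\cdots\otimes\beta_{i_d,x}\Bigr),
\]
the iterated weighted shifts $T_{i_1,\dots,i_d}$ being precisely the coordinates of $(-DL^{-1})^d$ in the frame $\{\beta_{i,x}\}$. The key companion lemma is a regularization property: since $T_{i_1,\dots,i_d}G(\xi(x))=\langle(-DL^{-1})^dG(\xi(x)),\beta_{i_1,x}\otimes\cdots\otimes\beta_{i_d,x}\rangle_{\HH^{\otimes d}}$, Meyer's $L^p$-equivalence of $D$ and $(-L)^{1/2}$ gives $\sup_x\|T_{i_1,\dots,i_d}G(\xi(x))\|_{\mathbb{D}^{d,p}}\le C\|G\|_{L^p(\R^m,\gamma_m)}$. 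One then applies (\ref{meyer1}) \emph{once}, with the fixed order $d$, to the full increment over $S_2\setminus S_1$; the $\HH^{\otimes d}$-inner products yield the factor $\prod_{\ell=1}^d r_{i_\ell,j_\ell}(x-y)$, and \textbf{(C1)} delivers the $(y_2-y_1)^{1/2}$ scaling. No truncation, no tail, no $q$-dependent constants.
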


The above  result is multi-dimensional counterpart of Theorem 1.1 of \cite{campese_continuous_2018}.
 
 \medskip
 Consider the  $m\times m$  symmetric matrix  $C = (c_{j,k})_{1\leq j,k \leq m} $ given by
\begin{equation}  \label{matrixC}
\begin{cases}
c_{j,k} &= \int_{\mathbb{R}^n}G(x)x_jx_k\phi_m(x)dx,    \quad   {\rm for} \,\,  j\neq k\cr
c_{j,j} &= \int_{\mathbb{R}^n}G(x)(x_j^2-1)\phi_m(x)dx, \quad     {\rm for} \,\,  j= k.
\end{cases}
\end{equation}
We have the following lemma which gives an expression for the asymptotic variance of the second chaos component.

\begin{lemma} \label{lem3.5}
Let $G$ be of Hermite rank $2$ and assume {\bf (C1)}.   Let $C$ be the matrix defined in (\ref{matrixC}). Then,
\begin{equation}  \label{V2}
\lim_{s\to \infty} V_s^{(2)} = V^{(2)} =  \frac 12 \left\|   {\rm Tr} [rC rC]  \right\|_{L^1(\R^n)}.
\end{equation}
\end{lemma}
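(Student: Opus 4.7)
The plan is to compute the covariance kernel $\mathbb{E}[G_2(\xi(x))G_2(\xi(y))]$ explicitly as a matrix trace in $r(x-y)$, convert the double integral defining $V_s^{(2)}$ into a single integral with a Fej\'er--type weight via the substitution $u=x-y$, and then pass to the limit by dominated convergence.

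First I would unpack $G_2$ from \eqref{Gq}. For a multi-index $a\in\mathcal{I}_2$ either $a=2e_j$, giving $\overbar{H}_a(x)=x_j^{2}-1$, or $a=e_j+e_k$ with $j<k$, giving $\overbar{H}_a(x)=x_jx_k$. Comparing $c(G,a)=\tfrac{1}{a!}\int G\,\overbar{H}_a\,d\gamma_m$ with \eqref{matrixC} gives $c(G,2e_j)=\tfrac12 c_{j,j}$ and $c(G,e_j+e_k)=c_{j,k}$ for $j\ne k$, so by symmetry of $C$,
\[
G_2(x)\;=\;\tfrac{1}{2}\bigl(x^{\top}Cx-\operatorname{Tr}(C)\bigr).
\]
Setting $u=x-y$ and $R=r(u)$, Isserlis' formula together with $r(0)=\mathrm{Id}$ gives
\[
\mathbb{E}[\xi_j(x)\xi_k(x)\xi_l(y)\xi_m(y)]\;=\;\delta_{jk}\delta_{lm}+R_{j,l}R_{k,m}+R_{j,m}R_{k,l}.
\]
Contracting against $c_{j,k}c_{l,m}$ and cancelling the $(\operatorname{Tr} C)^{2}$ coming from $\delta_{jk}\delta_{lm}$ against the constant in $G_2$, the two remaining pieces each collapse, using the symmetry of $C$, into $\operatorname{Tr}(CRCR^{\top})$, yielding the closed form
\[
\mathbb{E}[G_2(\xi(x))G_2(\xi(y))]\;=\;\tfrac{1}{2}\operatorname{Tr}\!\bigl(C\,r(u)\,C\,r(u)^{\top}\bigr).
\]

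Next I would substitute $u=x-y$ in the double integral defining $V_s^{(2)}$, which produces the familiar factor $\prod_{i=1}^{n}(1-|u_i|/(2s))\,\mathbf{1}_{[-2s,2s]^{n}}(u)$ in front of this trace kernel. Condition \textbf{(C1)} with $d=2$ places each $r_{j,k}$ in $L^{2}(\R^n)$, so Cauchy--Schwarz puts every product $R_{j,l}R_{k,m}$ in $L^{1}(\R^n)$; since the weight is bounded by one and converges pointwise to one, dominated convergence yields
\[
\lim_{s\to\infty}V_s^{(2)}\;=\;\tfrac{1}{2}\int_{\R^{n}}\operatorname{Tr}\!\bigl(C\,r(u)\,C\,r(u)^{\top}\bigr)\,du.
\]
Since this integrand equals $\|C\,r(u)\|_{\mathrm{HS}}^{2}\ge 0$, the $L^{1}$--norm of the scalar trace in \eqref{V2} reduces to the integral, and the identity $r(-u)=r(u)^{\top}$ together with a $u\mapsto -u$ change of variable recasts the expression in the symbolic form $\operatorname{Tr}[r\,C\,r\,C]$ stated in \eqref{V2}.

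The only real obstacle is the algebraic bookkeeping that collapses the fourfold Isserlis sum into the single trace $\operatorname{Tr}(CRCR^{\top})$, along with the short symmetry step that matches this with the stated $\operatorname{Tr}[rCrC]$; once those are in hand, the limit is a routine dominated convergence argument driven by the $L^{2}$--integrability of the covariances guaranteed by Condition \textbf{(C1)}.
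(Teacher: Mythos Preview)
Your approach mirrors the paper's: both compute $C_{G_2}(u)=\E[G_2(\xi(0))G_2(\xi(u))]$ via Gaussian moment identities, express it as a quadratic trace in $r(u)$, and pass to the limit through the same change of variables $u=x-y$ with the Fej\'er weight and dominated convergence already used in Lemma~3.1. The paper carries out the Wick computation by case analysis on the multi-indices in $\mathcal{I}_2$, whereas you package $G_2(x)=\tfrac12\bigl(x^{\top}Cx-\operatorname{Tr}C\bigr)$ as a quadratic form and apply Isserlis' theorem directly; this is a cleaner bookkeeping device but not a genuinely different route.

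One algebraic slip: $\operatorname{Tr}\bigl(C\,r(u)\,C\,r(u)^{\top}\bigr)$ is \emph{not} equal to $\|C\,r(u)\|_{\mathrm{HS}}^{2}$, since the latter is $\operatorname{Tr}\bigl(C\,r(u)\,r(u)^{\top}C\bigr)$. Hence your pointwise nonnegativity claim for the integrand is unjustified, and with it the step that drops the absolute value in the $L^{1}$-norm. This does not affect your computation of the limit, which correctly yields $V^{(2)}=\tfrac12\int_{\R^{n}}\operatorname{Tr}\bigl(C\,r(u)\,C\,r(u)^{\top}\bigr)\,du$; the paper's own proof also ends with the plain integral $\tfrac12\int_{\R^{n}}\operatorname{Tr}[r(x)Cr(x)C]\,dx$ and does not invoke the absolute value literally.
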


Suppose in addition that  for every $1 \leq j,k \leq m$, $r_{j,k} \in L^1(\R^n)$. Note that due to the stationarity and mean-square continuity of the fields $\xi_j$s, we have, by Bochner's theorem (Theorem 5.4.1 of \cite{adler_random_2007} or equation 1.2.1 of \cite{ivanov_statistical_1989}), that there exist finite measures $\nu_j$s (called the spectral measures) such that 
\begin{equation}
r_{j,j}(x) = \int_{\mathbb{R}^n} e^{i\langle t,x \rangle} \nu_j(dt).
\end{equation}
Moreover,  due to the  integrability of the covariances, we have that  the $\nu_j$s are absolutely continuous with respect to the Lebesgue measure and admit densities (called spectral densities). Denote the spectral density of $\xi_j$ as $f_j$ and $\alpha_j = \sqrt{f_j}$. 
Set  $\alpha(x) = (\alpha_i(x))_{1\leq i \leq m}$ and let $H(x) = \alpha^T(-x)C\alpha(x)$.  Under these conditions, equation  (\ref{V2}) can be written as
\begin{equation}  \label{3.5}
 V^{(2)} = \frac {(2\pi)^{-n}}2 \| H\| ^2_{L^2(\mathbb{R}^n)}. 
\end{equation}
This formula  has been  motivated by the result obtained in \cite{nicolaescu_clt_2015} in the   context of  the Central Limit Theorem for  the number of critical points, where   $V^{(2)}$ is obtained as the $L^2$-norm of a function.

\section{Preliminaries and chaos expansions}

In this section, we recall the Malliavin operators associated with an isonormal Gaussian process and  the properties of the multiple Wiener-It\^o integrals. We refer the reader to \cite{nualart_malliavin_2006} for a detailed account on this topic. We then write the chaos expansions of  the variables $L_s $ introduced in (\ref{Ls}).

We claim that there exist a Hilbert space $\HH$ and elements $\beta_{j,x}\in \HH$, $1\le i,j \le m$, $x\in \R^n$, such that
\[
r_{i,j} (x-y) = \langle \beta_{i,x} ,\beta_{j,y} \rangle_{\HH}
\]
for all $x,y \in \R^n$ and $1\le i,j \le m$. Indeed, it suffices to choose as $\HH$ the Gaussian subspace of $L^2(\Omega)$ generated by the random field $\xi$ and take $\beta_{i,x} = \xi_i(x)$. 
Consider   an isonormal Gaussian process $X$ on   $\HH$.   That is, $X=\{X(h): h\in \HH\}$ is a Gaussian centered family of random variables,
defined in a probability space $(\Omega, \mathcal{F}, P)$,  such that  $\E[X(h) X(g)] =\langle h,g \rangle_{\HH}$ for any $g,h \in \HH$.   In this situation, $\{\xi_i(x): x \in \R^n, 1\le i\le m\}$ has the same law as $\{X(\beta_{i,x}) :
x\in \R^n , 1\le i\le m\}$.
Therefore, without loss of generality we  can assume the  existence of an isonormal process $X$ on $\HH$ such that 
\begin{equation}   \label{ecu5}
\xi_j(x) = X(\beta_{j,x}).
\end{equation} 
We will also assume that the $\sigma$ field $\mathcal{F}$ is generated by $\xi$.

For a smooth and cylindrical random variable $F= f(X(\varphi_1), \dots , X(\varphi_n))$, with $\varphi_i \in \mathfrak{H}$ and $f \in C_b^{\infty}(\mathbb{R}^n)$ ($f$ and its partial derivatives are bounded), we define its Malliavin derivative as the $\mathfrak{H}$-valued random variable given by
\[
 DF = \sum_{i=1}^n \frac{\partial f}{\partial x_i} (X(\varphi_1), \dots, X(\varphi_n))\varphi_i.
\]
By iteration, we can also define the $k$-th derivative $D^k F$ which is an element in the space $L^2(\Omega; \mathfrak{H}^{\otimes k})$. The Sobolev space $\mathbb{D}^{k,p}$ is defined as the closure of the space of smooth and cylindrical random variables with respect to the norm $\|\cdot\|_{k,p}$ defined by 
\[
 \|F\|^p_{k,p} = \mathbb{E}(|F|^p) + \sum_{i=1}^k \mathbb{E}(\|D^i F\|^p_{\mathfrak{H}^{\otimes i}}),
\]
for any natural number $k$ and any real number $ p \geq 1$. For any Hilbert space $H$, we denote by $\mathbb{D}^{k,p}(H)$ the corresponding Sobolev space of $H$-valued random variables.

 We define the divergence operator $\delta$ as the adjoint of the derivative operator $D$. Namely, an element $u \in L^2(\Omega; \mathfrak{H})$ belongs to the domain of $\delta$, denoted by $\dom\, \delta$, if there is a constant $c_u > 0$ depending on $u$ and satisfying 
\[
|\mathbb{E} (\langle DF, u \rangle_{\mathfrak{H}})| \leq c_u \|F\|_{L^2(\Omega)}
\] for any $F \in \mathbb{D}^{1,2}$.  If $u \in \dom \,\delta$, the random variable $\delta(u)$ is defined by the duality relationship 
\[
\mathbb{E}(F\delta(u)) = \mathbb{E} (\langle DF, u \rangle_{\mathfrak{H}}) \, ,
\]
which is valid for all $F \in \mathbb{D}^{1,2}$.  
In a similar way, for each integer $k\ge 2$, we define the iterated divergence operator $\delta^k$ through the duality relationship 
\[
\mathbb{E}(F\delta^k(u)) = \mathbb{E}  \left(\langle D^kF, u \rangle_{\mathfrak{H}^{\otimes k}} \right) \, ,
\]
valid for any $F \in \mathbb{D}^{k,2}$, where $u\in  {\rm Dom}\, \delta^k \subset L^2(\Omega; \mathfrak{H}^{\otimes k})$.
 
 For any $p>1$ and any integer $k\ge 1$, the operator $\delta^k$ is continuous  from $\mathbb{D}^{k,p} (\HH^{\otimes k})$ into  $L^p(\Omega)$, and we have the  inequality (see, for instance, \cite[Proposition 1.5.4]{nualart_malliavin_2006})
\begin{equation} \label{meyer1}
\|\delta ^k(v) \| _{L^p(\Omega)} \le c_p \sum_{j=0}^k  \| D^jv\|_{L^p(\Omega; \HH^{\otimes j})},
\end{equation}
for any $v\in \mathbb{D}^{k,p} (\HH^{\otimes k})$. This inequality is a consequence of Meyer inequalities (from \cite{Meyer}), which  states the equivalence in $L^p(\Omega)$, for any $p>1$, of the operators $D$ and $(-L) ^{1/2}$, where $L$ is the generator of the Ornstein-Uhlenbeck semigroup introduced below.

\medskip
 Let  $\mathfrak{H}^{\otimes q}$ the $q$-th tensor product of the Hilbert space $\HH$ and denote by $\mathfrak{H}^{\odot q}$ as the subset of $\mathfrak{H}^{\otimes q}$ consisting of all symmetric tensors.  For any $f\in \HH$ we define the  generalized multiple Wiener-It\^o stochastic integral of the symmetric tensor $f^{\otimes q}$ by
 \begin{equation}
 I_q(f^{\otimes q}) =H_q(X(f)),
\end{equation}
where $H_q(x)$ is the $q$-th Hermite polynomial given by  (\ref{hermite}). It is known that the multiple integral $I_q$ can be extended to $\HH^{\odot q}$ and it has the following properties
\begin{equation} \label{mwiprop}
 \mathbb{E}[I_q(f)] = 0,  \qquad \mathbb{E}[I_p(f)I_q(g)] = \delta_p^q \; q! \langle f, g\rangle_{\mathfrak{H}^{\otimes q}}
\end{equation}
 for $f, g \in \mathfrak{H}^{\odot q}$.
 That is,  $I_q $ is a linear isometry  between $ \mathfrak{H}^{\odot q}$ equipped with the modified norm  $\sqrt{q!} \| \cdot \|_{\HH^{\otimes q}}$ and the $q$-th Wiener chaos $\mathcal{H}_q$.
   For an element  $f\in \HH^{\otimes q}$ which is not necessarily symmetric, we define
$I_q(f)= I_q(\widetilde{f})$, where $ \widetilde{f}$ denotes the symmetrization of $f$.

Any element $F\in L^2(\Omega)$ admits a Wiener chaos expansion
\begin{equation} \label{chaos}
F = \sum_{q=0}^\infty I_q(f_q),
\end{equation}
 where  $f_0 = \E[F]$, $I_0$ is the identity on $\R$ and the kernels $f_q\in \HH^{\otimes q}$ are uniquely determined by $F$.

Following appendix B and equation B.4.4 of \cite{nourdin_normal_2012}, we can define the contractions of two   tensors as follows. For two tensors $f = \sum_{j_1,\dots,j_p = 1}^{\infty} a_{j_1,\dots,j_p} e_{j_1} \otimes \cdots \otimes e_{j_p} \in \mathfrak{H}^{\otimes p}$ and $g = \sum_{k_1,\dots,k_q = 1}^{\infty} b_{k_1,\dots,k_q} = e_{k_1} \otimes \cdots \otimes e_{k_q} \in \mathfrak{H}^{\otimes q}$, the $l$-th contraction of $f$ and $g$ ($l \le \min(p,q$))  is the element of $\mathfrak{H}^{\otimes p+q-2l}$ given by 
\begin{equation} \label{contrac1}
f \otimes_l g = \sum_{j_1,\dots,j_p = 1}^{\infty} \sum_{k_1,\dots,k_q = 1}^{\infty} a_{j_1,\dots ,j_p} b_{k_1,\dots,k_q} \prod_{i=1}^l \langle e_{j_i}, e_{k_i} \rangle_{\mathfrak{H}} e_{j_{l+1}} \otimes \cdots \otimes e_{j_p} \otimes e_{k_{l+1}} \otimes \cdots \otimes e_{k_{q}}.
\end{equation}
Notice that even  if $f$ and $g$  are symmetric, the contraction  $f \otimes_l g $ is not necessarily a symmetric tensor.
 Using contractions, we can state the following product formula for multiple Wiener-It\^o integrals.
\begin{equation}  \label{product}
I_p(f)I_q(g) = \sum_{l=0}^{p \wedge q} l! {p \choose l} {q \choose l} I_{p+q-2l}(f \widetilde{\otimes_l} g),
\end{equation}
where $f\in   \HH^{\odot p}$ and  $g\in  \HH^{\odot q}$.

The Ornstein-Uhlenbeck semigroup $\{P_t: t \geq 0\}$ is the  semigroup of operators on $L^2(\Omega)$ defined by
\[
P_t F = \sum_{q=0}^\infty e^{-qt} I_q(f_q),
\]
if $F$ admits the Wiener chaos expansion  (\ref{chaos}). Denote by $L = \frac{d}{dt}|_{t=0} P_t$ the infinitesimal generator of $\{P_t: t \geq 0\}$ in $L^2(\Omega)$. Then we have $LF=-\sum_{q=1}^\infty q J_q(F)$ for any $F \in {\rm Dom}\, L=\mathbb{D}^{2,2} $ where $J_q(F) = I_q(f_q)$. We define the pseudo-inverse of $L$ as 
\begin{equation}  \label{L-1}
L^{-1} F = -\sum_{q=1}^\infty \frac{1}{q} J_q F.
\end{equation} 
The basic  operators $D$, $\delta$ and $L$ satisfy the relation $ LF =-\delta DF $, for any random variable $F\in\mathbb{D}^{2,2}$.
As a consequence, any centered random variable $F\in L^2(\Omega)$ can be  expressed as a divergence:
\begin{equation}  \label{deltad}
F= \delta (-DL^{-1}F).
\end{equation}

\medskip
We now turn to giving the chaos expansions for $L_s$ given by (\ref{Ls}).
For $\beta_{j,x}$ as introduced  in (\ref{ecu5}), we have that for any $x \in \mathbb{R}^n$ and $j \neq k$, under \textbf{(C1)}, 
\begin{equation}  \label{betaj}
\langle\beta_{j,x}, \beta_{k,x}\rangle _{\HH}= \mathbb{E}[\xi_j(x)\xi_k(x)] = 0.
\end{equation}
 Now consider any multi-index $a$ such that $|a| = q$. By the previous facts  (\ref{hermite2}),  (\ref{ecu5})   and taking into account the
 product formula (\ref{product}) and (\ref{betaj}), we can write
\[
\overbar{H}_a(\xi(x)) = \prod_{j=1}^m I_{a_j}(\beta_{j,x}^{\otimes a_j} ) = I_q(\beta_{1,x}^{\otimes a_1} \otimes \cdots \otimes \beta_{m,x}^{\otimes a_m})
\]
We introduce the elements $\rho_x^q$ and $\chi_s^q$ which characterize the expansions. Let 
\begin{equation} \label{ecu2}
\rho_x^q = \sum_{a\in\mathcal{I}_q} c(G, a) \beta_{1,x}^{\otimes a_1} \otimes \cdots \otimes \beta_{m,x}^{\otimes a_m}.
\end{equation}
Notice that, although for each $a\in\mathcal{I}_q$, the tensor $\beta_{1,x}^{\otimes a_1} \otimes \cdots \otimes \beta_{m,x}^{\otimes a_m}$ is not 
necessarily symmetric, the element  $\rho_x^q$ is symmetric because $c(G,a)$ is a symmetric function of the multiindex $a$.
Set
 \begin{equation}  \label{ecu3}
 \chi_s^q = \frac{1}{(2s)^{n/2}} \int_{[-s,s]^n} \rho_x^q dx.
 \end{equation}
  By linearity of the multiple Wiener-It\^o integral and Fubini's theorem for multiple Wiener-It\^o integral, we have that
\[
G_q(\xi(x)) = I_q(\rho_x^q); \; \; \; \; L_s^{(q)} = I_q(\chi_s^q),
\]
where $G_q$ and  $L^{(q)}_s$ are defined in  (\ref{Gq}) and (\ref{Lsq}), respectively.
Therefore, we have the chaos expansion  
\begin{equation} \label{ecu4}
L_s = \sum_{q=d}^{\infty} I_q(\chi_s^q)=\sum_{q=d}^{\infty} L_s^{(q)}.
\end{equation}
This is true because,
\begin{align*}
& \mathbb{E}\Big[\Big(L_s - \sum_{q=d}^l L_s^{(q)}\Big)^2\Big] = \frac{1}{(2s)^n} \mathbb{E}\Bigg[\Big(\int_{[-s,s]^n} G(\xi(x)) - \sum_{q=d}^l G_q(\xi(x))dx\Big)^2\Bigg] \\
 & \qquad  = \frac{1}{(2s)^n} \int_{[-s,s]^n} \int_{[-s,s]^n} \mathbb{E} \Big[\Big(G(\xi(x)) - \sum_{q=d}^l G_q(\xi(x))\Big)\Big(G(\xi(y)) - \sum_{q=d}^l G_q(\xi(y))\Big)\Big] dxdy \\
 & \qquad \leq \frac{\mathbb{E}\big[(G(\xi(0)) - \sum_{q=d}^l G_q(\xi(0)))^2\big]}{(2s)^n} \int_{[-s,s]^n} \int_{[-s,s]^n} dxdy \to 0
\end{align*}
as $l \to \infty$. The last step follows from   stationarity and Cauchy-Schwarz inequality.

\begin{remark}
Due to properties of the multiple Wiener-It\^o integrals noted in \eqref{mwiprop}, we have $\mathbb{E}[L_s^q] = \mathbb{E}[I_q(\chi_s^q)] = 0$ and $\mathbb{E}[G_q(\xi(x))] = \mathbb{E}[I_q(\rho_x^q)]= 0$. Also $\mathbb{E}[G_{q_1}(\xi(x))G_{q_2}(\xi(y))] = 0$ for all $q_1 \neq q_2$.
\end{remark}

\section{Proofs}

\subsection{Proof of Lemma 3.1}

Let us first prove the weak stationarity of the random field $G \circ \xi$.   Taking into account that  $G_q(\xi(x))$ is the projection on the $q$th Wiener chaos of  $G(\xi(x))$, we can write, for any $x,y \in \R^n$,
\[
\mathbb{E}[G(\xi(x))G(\xi(y))] = \sum_{q=d}^{\infty} \mathbb{E}[G_q(\xi(x))G_q(\xi(y))].
\]
Furthermore, in view of the Diagram formula (see \cite{arcones_limit_1994}) we have that $C_{G_q}(x,y)$ depends on the covariances $r_{i,j}(x-y)$ and hence $C_{G_q}(x,y)$ is a function of $x-y$. As a consequence, we get that $C_G(x,y) = C_G(x-y)$ is a function of $x-y$. 

\medskip
To show   (\ref{ecu1a}) we will make use of Lemma 1 of \cite{arcones_limit_1994} and condition \textbf{(C1)}.
We have
\[
(2s)^n  V_s  = \mathbb{E}\Big[\Big(\int_{[-s,s]^n} G(\xi(x)) dx\Big)^2\Big] = \int_{[-s,s]^n} \int_{[-s,s]^n} C_G(x-y) dxdy.
\]
Since by Cauchy-Schwarz inequality and stationarity, $\mathbb{E}[G(\xi(x))G(\xi(y))] \leq \mathbb{E}[(G(\xi(0)))^2]$, we have $V_s < \infty$ for all $s>0$ and
\begin{equation} \label{Vs}
\begin{split}
V_s &= \frac{1}{(2s)^n}\int_{[-s,s]^n} \int_{[-s,s]^n} C_G(x-y) dxdy\\
 			 &= \int_{[-2s,2s]^n} C_G(x) \prod_{i=1}^n \Big(1 - \frac{|x_i|}{2s}\Big) dx\\
 			 &= \int_{\mathbb{R}^n} C_G(x) \prod_{i=1}^n \Big(1 - \frac{|x_i|}{2s}\Big) \textbf{1}_{[-2s,2s]^n}(x) dx \\
 			 &=: \int_{\mathbb{R}^n} C_G(x) I_{2s}(x) dx.
\end{split}
\end{equation}
We set
\begin{equation} \label{psi}
\psi(x) = \Bigg(\sup_{1 \leq i \leq m} \sum_{j=1}^m |r_{i,j}(x)|\Bigg) \vee \Bigg(\sup_{1 \leq j \leq m} \sum_{i=1}^m |r_{i,j}(x)|\Bigg).
\end{equation}
By Lemma 1 of \cite{arcones_limit_1994}, on the set $\{ x: \psi(x) \le 1\}$, we have
\[
|C_G(x)| = |\mathbb{E} [G(\xi(0)) G(\xi(x))]| \leq \psi^d(x) \norm{G}^2_{L^2(\R^m, \gamma_m)}.
\]
Also $\int_{\mathbb{R}^n} \psi^d(x)dx < \infty$ as $\int_{\mathbb{R}^n} |r_{i,j}(x)|^d dx < \infty$ for all $1 \leq i,j \leq m$. On the other hand, 
on the set $\{ x: \psi(x) >1 \}$ we can write, taking into account that $|C_G(x)| \le  \norm{G}^2_{L^2(\R^m, \gamma_m)}$, 
\begin{align*}
 \int_{\{ \psi(x) >1\}}  |C_G(x)| dx &\le \sum_{i,j=1}^m \int_{\{ | r_{i,j}(x)| >\frac 1m\}}  |C_G(x)| dx \\
 & \le \norm{G}^2_{L^2(\R^m, \gamma_m)}  m^d
 \sum_{i,j=1}^m \int_{ \R^n}  |r_{i,j}(x)|^d dx<\infty.
\end{align*}
Observe that $|I_{2s}(x)| = |\Pi_{i=1}^n (1 - \frac{|x_i|}{2s}) \; \textbf{1}_{[-2s,2s]^n}| \leq 1$ for all $s>0$ and as $s \to \infty, I_{2s}(x) \to 1$.   Therefore by dominated convergence theorem,
\[
V = \lim_{s\to \infty} V_s = \int_{\mathbb{R}^n} C_G(x)dx < \infty.
\]
\qed

\subsection{Proof of Theorem 3.2}

We will apply Nualart and Hu's criteria for convergence in distribution to a normal variable (Theorem 3 of \cite{hu_renormalized_2005} or Theorem 6.3.1 of \cite{nourdin_normal_2012}). As a consequence, the theorem follows if the following conditions hold,

\hfill

\noindent 1) For every $q \geq d$, $V_s^{(q)} \to V_q < \infty$ as $s \to \infty$. \\
2) $V = \sum_{q=d}^{\infty} V^{(q)} < \infty$. \\
3) For every $q \geq d$ and every $1 \leq b \leq q-1$, $||\chi_s^q \otimes_b \chi_s^q||_{\mathfrak{H}^{\otimes (2q-2b)}} \to 0$ as $s \to \infty$. \\
4) $\sup_{s>0}\sum_{q=l+1}^{\infty} V_s^{(q)} \to 0$ as $l \to \infty$. \\

Here $\chi_s^q$ is given by  \eqref{ecu3}. Conditions 1), 2) hold by Lemma 3.1. For condition 3), by \eqref{ecu3} we have that
\begin{equation}
\chi_s^q \otimes_b \chi_s^q = \frac{1}{(2s)^n} \int_{[-s,s]^n} \int_{[-s,s]^n} \rho_x^q \otimes_b  \rho_y^q dx dy.
\end{equation} 
Denoting for a multi-index $i = (i_1,...,i_q)$, $\zeta_{i,x} = \beta_{i_1,x} \otimes \cdots \otimes \beta_{i_q,x}$, for the desired convergence to hold, we have, by equation \eqref{ecu2}, that it suffices to show that for any multi-indices $i$ and $j$,
\begin{equation}
J_s := \norm{\frac{1}{(2s)^n} \int_{[-s,s]^n} \int_{[-s,s]^n} \zeta_{i,x} \otimes_b \zeta_{j,y} dx dy}^2_{\mathfrak{H}^{\otimes(2q-2b)}} \to 0
\end{equation}
as $s \to \infty$. We have, using (\ref{contrac1}),
\begin{align*}
& J_s   = \frac{1}{(2s)^{2n}} \int_{[-s,s]^{4n}} \Bigg( \prod_{k=1}^b r_{i_k,j_k}(x-y) r_{i_k,j_k}(z-w)  \\
  & \times  \left  \langle   \left(  \otimes _{ \ell =b+1}^q\beta_{i_{\ell},x}    \right)  \otimes \left( \otimes _{ \ell =b+1}^q \beta_{j_{\ell},y}  \right)  , 
  \left( \otimes _{ \ell =b+1}^q\beta_{i_{\ell},z}  \right) \otimes \ \left( \otimes _{ \ell =b+1}^q \beta_{i_\ell,w}  \right)  \right\rangle_{\HH^{\otimes(2q-2b)}} \Bigg) dxdydzdw. 
\end{align*}
In the above expression, pairing together $\beta_{i_{b+k},x}$ and $\beta_{i_{b+k},z}$ and similarly with the index $j$, we get that,
\begin{align*}
& J_s = \frac{1}{(2s)^{2n}} \int_{[-s,s]^{4n}} \Bigg( \prod_{k=1}^b r_{i_k,j_k}(x-y) r_{i_k,j_k}(z-w) \prod_{k=b+1}^q r_{i_k,i_k}(x-z) r_{j_k,j_k}(y-w) \Bigg) dxdydzdw \\
  & \leq \frac{1}{(2s)^{2n}} \int_{[-s,s]^{4n}} \psi^b(x-y) \psi^b(z-w) \psi^{q-b}(x-z) \psi^{q-b}(y-w) dxdydzdw.
\end{align*}
where $\psi$ is as defined by \eqref{psi}. In what follows,  the value of constant $C$ is immaterial and changes with each step. By H\"older's inequality and the fact that $\psi \in L^q(\R^n)$ for all $q \geq d$ we have that
\begin{equation}
J_s \leq Cs^{-2n} \int_{[-s,s]^{3n}} \psi^b(x-y) \psi^{q-b}(y-w) dxdydw. 
\end{equation}
By the change of variables $(x,y,w) \mapsto (x-y,y-w,w)$ we have
\[J_s \leq Cs^{-n} \int_{[-2s,2s]^{2n}} \psi^b(u)\psi^{q-b}(v)dudv.
\]
We proceed in a manner similar to \cite{darses_limit_2010}. For $k > 0$ denote $T_k = [-k,k]^{2n}$ and $T_k^c$ to be its complement in $\R^{2n}$. Consider
the decomposition
\begin{align*}
 J_s \leq Cs^{-n} \int_{[-2s,2s]^{2n} \cap T_k} \psi^b(u) \psi^{q-b}(v) dudv  + Cs^{-n} \int_{[-2s,2s]^{2n} \cap T_k^c} \psi^b(u) \psi^{q-b}(v) dudv .
\end{align*}
For any fixed $k$, since $\psi$ is bounded, we have that the first term tends to zero as $s \to \infty$. For the second term, by H\"older's inequality we can write
\begin{align*}
& s^{-n} \int_{[-2s,2s]^{2n} \cap T_k^c} \psi^b(u) \psi^{q-b}(v) dudv \\
& \qquad \leq Cs^{-n}\Bigg(s^{n} \int_{\R^n \setminus [-k,k]^n} \psi^q(u)du\Bigg)^{b/q} \Bigg(s^n \int_{\R^n \setminus [-k,k]^n} \psi^q(v)dv\Bigg)^{(q-b)/q}\\
& \qquad \leq C \int_{\R^n \setminus [-k,k]^n} \psi^q(x)dx \to 0
\end{align*}
as $k \to \infty$ yielding the desired conclusion.

Condition 4) also holds as we have, by \eqref{Vs} in Lemma 3.1,
\[
\sum_{q=l+1}^{\infty} V_s^{(q)} =  \sum_{q=l+1}^{\infty} \int_{\mathbb{R}^n} C_{G_q}(x) \; I_{2s}(x)dx 
		                \leq   \sum_{q=l+1}^{\infty} \int_{\mathbb{R}^n} C_{G_q}(x) dx 
		                =   \sum_{q=l+1}^{\infty} V^{(q)} \to 0
\]

as $l \to \infty$ uniformly in $s$.

\qed

\subsection{Proof of Theorem 3.3}

As defined in the statement,
\[
Z_{s,y} = \frac{1}{(2s)^{n/2}} \int_{[-sy^{1/n},sy^{1/n}]^n} G(\xi(x)) dx, \; \; \; y \in [0,\infty).
\]
We gather the necessary notation for the Wiener chaos expansions for the new variables. By the Wiener chaos expansions in \eqref{ecu4}, we have for any $y > 0$, $Z_{s,y} = \sum_{q=d}^{\infty} Z_{s,y}^{(q)} = \sum_{q=d}^{\infty} I_q(\chi_{s,y}^q)$. Here
\[
Z_{s,y}^{(q)} = \frac{1}{(2s)^{n/2}} \int_{[-sy^{1/n},sy^{1/n}]^n} G_q(\xi(x)) dx 
\]
and
\[ \chi_{s,y}^q = \frac{1}{(2s)^{n/2}} \int_{[-sy^{1/n},sy^{1/n}]} \rho_x^q dx.
\]
Due to results by Nualart and Peccati \cite{nualart_central_2005} and Peccati and Tudor \cite{peccati_gaussian_2005} (or see Theorem 2.1 of \cite{campese_continuous_2018}), the convergence of the finite dimensional distributions of $Z_s$ to those of the Brownian motion $\sqrt{V}B_y$ follows if we show that the covariances of the corresponding projections on each Wiener chaos converge. Namely for any $q \geq d$ and $y_1, y_2 > 0$, 
\[
\E[Z_{s,y_1}^{(q)}Z_{s,y_2}^{(q)}] \to V^{(q)} y_1 \wedge y_2
\]
as $s \to \infty$, where $V^{(q)} = \lim_{s \to \infty} V_s^{(q)}.$

Let $y_1 \leq y_2$ and set $s_1 = sy_1^{1/n}$ and $s_2 = sy_2^{1/n}$. Denote $A_s = [-s_1 - s_2, s_1 + s_2]^n$ and $C_s = [s_1 - s_2, s_2 - s_1]^n$. By the change of variables $(x,y) \mapsto (x-y,y)$ we have,
\[
\begin{split}
\mathbb{E}[Z_{s,y_1}^{(q)}Z_{s,y_2}^{(q)}] &= \frac{1}{(2s)^n} \int_{[-s_1,s_1]^n} \int_{[-s_2,s_2]^n} C_{G_q}(x-y) dx dy \\
								  &= \frac{1}{(2s)^n} \int_{C_s} C_{G_q}(u) (2s_1)^n du \\
								  &+ \frac{1}{(2s)^n} \int_{A_s \setminus C_s} C_{G_q}(u) \prod_{i=1}^n \Big(s_1 + s_2 - |u_i|\Big) du. 
								  \end{split}
\]
Due to Lemma 3.1 applied to the random field $G_q(\xi(x))$, we have that as $s \to \infty$
\[
\frac{1}{(2s)^n} \int_{[s_1 - s_2, s_2 - s_1]^n} C_{G_q}(u) (2s_1)^n du \to  V^q y_1
\]
and by dominated convergence theorem the second term converges to zero, that is,
\[
\int_{\mathbb{R}^n} C_{G_q}(u) \prod_{i=1}^n \Big(\frac{y_1^{1/n} + y_2^{1/n}}{2} - \frac{|u_i|}{2s} \Big) \textbf{1}_{[A_s \setminus C_s]} du \to 0.
\]
Therefore, the theorem follows.
\qed

\subsection{Proof of Theorem 3.4}

Since we have established the convergence of the finite dimensional distributions, it now suffices to show that the family of probability measures $\{P_s : s>0\}$ is tight. By problem 4.11 of \cite{karatzas_brownian_2012}, it suffices to show that for some $p > 2$ and for every $T > 0$, the following holds for $0 \leq y_1 \leq y_2 \leq T$,
\begin{equation}
\sup_{s>0} \norm{Z_{s,y_2} - Z_{s,y_1}}_{L^p(\Omega)}\leq C_T |y_2 - y_1|^{1/2}.
\end{equation}
The desired estimate will be obtained by employing a weighted shift operator and obtaining a representation using the divergence operator. We proceed to define the shift operator.

\hfill

If $G\in L^2(\R^m,\gamma_m)$ has rank $d\ge 1$ with the expansion \eqref{Gexp}, for any index $i =1,\dots, m$, we define the   operator $T_{i}$ by
\begin{equation} \label{t1a}
T_{i}(G)(x) = \sum_{q=d}^\infty  \sum_{ a\in \mathcal{I}_q}   c(G,a)  \frac {a_i }qH_{a_i-1} (x_i) \prod _{j=1 , j\not =i} ^m  H_{a_j}(x_j) \,.
\end{equation}
 We know that $G(\xi(x))$ has the Wiener chaos expansion
 \begin{equation}  \label{Gchaos}
 G(\xi(x))= \sum_{q=d}^\infty  \sum_{ a\in \mathcal{I}_q}   c(G,a)  I_q ( \beta_{1,x} ^{\otimes a_1} \otimes \cdots \otimes  \beta_{m,x} ^{\otimes a_m}).
 \end{equation}
 
 The shift operator allows us to represent  $ G(\xi(x))$ as a divergence.
 Notice that this operator is more complicated than the  shift operator considered in the one-dimensional case (see \cite{nourdin_functional_2018}) because we need the weights
 $a_i/q$ in order to have the representation as a divergence. Actually, we are interested in representing $G(\xi(x))$ as an iterated divergence.
 For any $2\le k\le d$ and indexes $i_1, \dots, i_k \in \{1,\dots, m\}$, we can define the iterated operator
\[
T_{i_1. \dots, i_k}= T_{i_1}\circ \stackrel{k)} { \cdots} \circ T_{i_k}.
\]
The following result is our representation theorem.
 
\begin{lemma} \label{lem1} 
 For any $2\le k\le d$, we  have 
\[
 G(\xi(x)) =   \delta^k \left(    \sum_{ i_1, \dots, i_k=1} ^m  T_{i_1, \dots, i_k} G(\xi(x))  \beta_{i_1,x} \otimes \cdots \otimes \beta_{i_k,x} \right) \,.
 \]
\end{lemma}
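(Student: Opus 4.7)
The strategy is induction on $k$, with the substantive calculation being the base case $k=1$ and the remainder a bookkeeping exercise that promotes that identity to iterated divergences.

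For the base case, I would show $G(\xi(x)) = \delta\bigl(\sum_{i=1}^m T_i G(\xi(x))\,\beta_{i,x}\bigr)$ by linearity in the expansion~\eqref{Gchaos}, reducing to a check on each Hermite product $\overbar H_a(\xi(x))$ with $|a|=q\ge d$. The key structural input is the orthonormality $\langle\beta_{i,x},\beta_{j,x}\rangle_{\HH}=\delta_{i,j}$ from \eqref{betaj} (guaranteed by \textbf{(C1)}). Combined with the product rule $\delta(Fh)=F\,X(h)-\langle DF,h\rangle_{\HH}$ and the Hermite recursion $H_{a_i}(y)=y\,H_{a_i-1}(y)-(a_i-1)H_{a_i-2}(y)$, a short direct computation yields
\[
\delta\Bigl(\tfrac{a_i}{q}\,H_{a_i-1}(\xi_i(x))\prod_{j\ne i}H_{a_j}(\xi_j(x))\,\beta_{i,x}\Bigr) = \tfrac{a_i}{q}\,\overbar H_a(\xi(x)),
\]
and summing over $i$ and using $\sum_i a_i=q$ recovers $\overbar H_a(\xi(x))$. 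This is precisely where the weights $a_i/q$ built into \eqref{t1a} are essential.

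For the inductive step from $k$ to $k+1$ (valid while $k+1\le d$, since $T_{i_1,\dots,i_k}G$ then has Hermite rank at least $d-k\ge 1$), apply the base case to $T_{i_1,\dots,i_k}G$ in place of $G$ to obtain
\[
T_{i_1,\dots,i_k}G(\xi(x)) = \delta\Bigl(\sum_{i_{k+1}=1}^m T_{i_{k+1},i_1,\dots,i_k}G(\xi(x))\,\beta_{i_{k+1},x}\Bigr),
\]
and substitute into the inductive hypothesis. The inner divergence is then pulled outside using the identity
\[
\delta^k\Bigl(\sum_{i_1,\dots,i_k}\delta(v_{i_1,\dots,i_k})\,\beta_{i_1,x}\otimes\cdots\otimes\beta_{i_k,x}\Bigr) = \delta^{k+1}\Bigl(\sum_{i_1,\dots,i_k} v_{i_1,\dots,i_k}\otimes\beta_{i_1,x}\otimes\cdots\otimes\beta_{i_k,x}\Bigr),
\]
which follows by a double application of the duality formula $\E[F\,\delta^k(w)]=\E[\langle D^k F,w\rangle_{\HH^{\otimes k}}]$ to a test variable $F\in \mathbb{D}^{k+1,2}$. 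A relabeling of dummy indices then produces the level-$(k+1)$ statement.

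The main obstacle is the tensor-order bookkeeping in the pullout identity---the newly introduced factor $\beta_{i_{k+1},x}$ becomes the first tensor slot and only matches the paper's convention after renaming---together with the technical check that each $T_{i_1,\dots,i_k}G(\xi(x))\,\beta_{i_1,x}\otimes\cdots\otimes\beta_{i_k,x}$ lies in $\dom\,\delta^k$. Since $G\in L^2(\R^m,\gamma_m)$, the chaotic projections involved are polynomials of bounded degree in Gaussians and hence belong to $\mathbb{D}^\infty(\HH^{\otimes k})$; one therefore establishes the identity chaos-by-chaos and extends to the full series using the Meyer inequality~\eqref{meyer1}.
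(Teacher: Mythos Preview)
Your argument is correct, but the route differs from the paper's. The paper does not compute with the divergence formula $\delta(Fh)=F\,X(h)-\langle DF,h\rangle_{\HH}$ and the Hermite three-term recursion at all; instead it recognizes the weighted shift as a manifestation of the Ornstein--Uhlenbeck calculus. Concretely, it shows by direct chaos expansion that
\[
-DL^{-1}G(\xi(x)) = \sum_{i=1}^m T_iG(\xi(x))\,\beta_{i,x},
\]
the weight $a_i/q$ arising transparently from the eigenvalue $1/q$ of $L^{-1}$ on the $q$th chaos followed by the factor $a_i$ from differentiating $H_{a_i}$. Iterating gives $(-DL^{-1})^k G(\xi(x)) = \sum T_{i_1,\dots,i_k}G(\xi(x))\,\beta_{i_1,x}\otimes\cdots\otimes\beta_{i_k,x}$, and the lemma then follows in one stroke from the operator identity $\delta(-DL^{-1})=\mathrm{Id}$ on centered variables (equation~\eqref{deltad}), applied $k$ times. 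Your approach is more elementary---it avoids the OU semigroup entirely and would be accessible to a reader unfamiliar with $L^{-1}$---but the paper's approach explains \emph{why} the weights $a_i/q$ are the natural ones and dispenses with both the tensor-slot bookkeeping in your pullout identity and the separate inductive structure. Your remark that the domain issues can be handled chaos-by-chaos and then closed up is in the same spirit as what the paper does implicitly; in fact the paper's identification of $T_{i_1,\dots,i_k}G(\xi(x))$ with $\langle (-DL^{-1})^kG(\xi(x)),\beta_{i_1,x}\otimes\cdots\otimes\beta_{i_k,x}\rangle_{\HH^{\otimes k}}$ is what feeds into the subsequent regularization lemma.
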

\begin{proof}
Using the Wiener  chaos expansion (\ref{Gchaos}) and the operator $L^{-1}$ introduced in (\ref{L-1}), we can write
\begin{align*}
L^{-1} G(\xi(x)) &=- \sum_{q=d}^\infty  \sum_{ a\in \mathcal{I}_q}   c(G,a)   \frac 1q   I_q ( \beta_{1,x} ^{\otimes a_1} \otimes \cdots \otimes  \beta_{m,x} ^{\otimes a_m})\\
&=- \sum_{q=d}^\infty    \frac 1q   \sum_{ a\in \mathcal{I}_q}   c(G,a)      \overline{H}_a (\xi(x)).
\end{align*}
This implies, taking into account that $H_m' =mH_{m-1}$, that  
\begin{align}
-DL^{-1} G(\xi(x)) &=  \notag
 \sum_{q=d}^\infty      \sum_{ a\in \mathcal{I}_q}   c(G,a)   \sum_{i=1}^m   \frac {a_i} q H_{a_i-1} (\xi_i(x)) \prod _{j=1 , j\not =i} ^m  H_{a_j}(\xi_j(x)) \beta_{i,x} \\
&=  \sum_{i=1} ^m  T_iG(\xi(x)) \beta_{i,x}.  
\end{align}
Iterating $k$ times this procedure, we can write  
\begin{equation}
(-DL^{-1} )^kG(\xi(x)) =   \sum_{i_1, \dots, i_k=1} ^m  T_{i_1. \dots, i_k} G(\xi(x)) \beta_{i_1,x} \otimes \cdots \otimes \beta_{i_k,x}  .  \label{eq1}
\end{equation}
Taking into account that $-\delta DL^{-1}$ is the identity operator on centered random variables, we obtain 
\begin{align*}
\delta^k(-DL^{-1} )^kG(\xi(x)) &= \delta^{k-1} \delta(-DL^{-1}) [ (-DL^{-1})^{k-1} G(\xi(x))] \\
&=\delta^{k-1}   (-DL^{-1})^{k-1} G(\xi(x)).
\end{align*}
Iterating this relation and using (\ref{eq1}), yields
\[
G(\xi(x))= \delta^k(-DL^{-1} )^kG(\xi(x)) =\delta^k \left(  \sum_{i_1, \dots, i_k=1} ^m  T_{i_1. \dots, i_k} G(\xi(x)) \beta_{i_1,x} \otimes \cdots \otimes \beta_{i_k,x} \right).
\]
Then, the statement in the lemma is a consequence of (\ref{deltad}).
This completes the proof.
\end{proof}

The next  result is the regularization property of the shift operator $T_{i_1, \dots, i_k}$.

\begin{lemma}
Let $p\ge 2$. Suppose that $G\in L^p( \R^m, \phi_m)$. Then  $T_{i_1, \dots, i_k} G(\xi(x))$ belongs to $\mathbb{D}^{k,p}$ for any $k\le d$ and, moreover,
\begin{equation}  \label{ecu1}
\sup_{x\in \R^n} \sup_{1\le i_1, \dots, i_k \le m} \| T_{i_1, \dots, i_k} G(\xi(x)) \| _{k,p} <\infty.
\end{equation}
\end{lemma}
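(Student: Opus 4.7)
The plan is to reduce the claim to a bound on the Gaussian Sobolev norm of the function $T_{i_1,\dots,i_k}G$ on $(\R^m,\gamma_m)$, and then establish that bound via Meyer's inequalities on the finite-dimensional Gaussian space.

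Under \textbf{(C1)} we have $r(0)=\mathrm{Id}_{m\times m}$, so for every fixed $x\in\R^n$ the vector $\xi(x)$ has law $\gamma_m$ and $\{\beta_{1,x},\dots,\beta_{m,x}\}$ is orthonormal in $\HH$. Consequently, for any smooth $h:\R^m\to\R$, the chain rule for the Malliavin derivative yields
\[
D^\ell (h(\xi(x))) = \sum_{j_1,\dots,j_\ell=1}^m (\partial_{j_1}\cdots\partial_{j_\ell}h)(\xi(x))\,\beta_{j_1,x}\otimes\cdots\otimes\beta_{j_\ell,x},
\]
and by the orthonormality just noted, $\|D^\ell(h(\xi(x)))\|_{\HH^{\otimes\ell}}^2 = \sum_{j_1,\dots,j_\ell}|(\partial_{j_1}\cdots\partial_{j_\ell}h)(\xi(x))|^2$. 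Taking $L^p(\Omega)$-norms and using $\xi(x)\sim\gamma_m$, one obtains $\|h(\xi(x))\|_{k,p}\le C_{m,k,p}\,\|h\|_{W^{k,p}(\gamma_m)}$, uniformly in $x$. The lemma therefore reduces to the $x$-independent estimate
\[
\sup_{1\le i_1,\dots,i_k\le m}\|T_{i_1,\dots,i_k}G\|_{W^{k,p}(\gamma_m)}\le C\|G\|_{L^p(\gamma_m)}.
\]

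Reading off from the definition \eqref{t1a}, one has $T_iG=\partial_i(-L_0)^{-1}G$ on centered $G$, where $L_0$ denotes the Ornstein--Uhlenbeck generator on $L^2(\R^m,\gamma_m)$; iterating gives $T_{i_1,\dots,i_k}=\partial_{i_1}(-L_0)^{-1}\cdots\partial_{i_k}(-L_0)^{-1}$. The hypothesis $k\le d$ guarantees that each successive input to the composition has Hermite rank at least one, so all inverses are well defined. By Meyer's inequalities applied on the finite-dimensional Gaussian space $(\R^m,\gamma_m)$ (the analogue of \eqref{meyer1} for the isonormal process on $L^2(\gamma_m)$), the operator $\partial_i(-L_0)^{-1}:W^{r,p}(\gamma_m)\to W^{r+1,p}(\gamma_m)$ is bounded for every integer $r\ge 0$ and every $1<p<\infty$. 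Composing $k$ such maps gives $T_{i_1,\dots,i_k}:L^p(\gamma_m)\to W^{k,p}(\gamma_m)$ boundedly with norm depending only on $k$, $m$, and $p$, which is the desired estimate.

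The main technical point is the one-derivative-gain of $\partial_i(-L_0)^{-1}$ in the $L^p$-Gaussian Sobolev scale, which rests on the $L^p$-boundedness of the Gaussian Riesz transforms $R_j=\partial_j(-L_0)^{-1/2}$, together with the commutation $\partial_j(-L_0)=(-L_0+I)\partial_j$ used to push extra derivatives past the resolvents into bounded spectral multipliers of $-L_0$. If one prefers to avoid the Riesz-transform machinery, an alternative route is to use Mehler's formula together with Gaussian integration by parts to represent $T_{i_1,\dots,i_k}G(y)$ as an absolutely convergent integral of $G\bigl(e^{-T}y+\sqrt{1-e^{-2T}}\,z\bigr)$ weighted by an explicit Hermite-polynomial kernel $h_\alpha(z)$, with $T=\sum_j t_j$, and then combine Minkowski's inequality with the $L^p$-contractivity of the Ornstein--Uhlenbeck semigroup; the singularity $(1-e^{-2T})^{-k/2}$ at $T=0$ is integrable after accounting for the Jacobian in the $t_j$-variables, and the same representation, differentiated in $y$, handles the higher Sobolev norms with the required uniformity in $x$ and in the indices.
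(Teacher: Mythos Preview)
Your argument is correct and is essentially the same as the paper's: both identify $T_{i_1,\dots,i_k}G(\xi(x))$ with the action of $(-DL^{-1})^k$ on $G(\xi(x))$ and then invoke Meyer's inequalities to gain $k$ derivatives in the $L^p$-Sobolev scale. The only cosmetic difference is that you first reduce to the finite-dimensional Gaussian space $(\R^m,\gamma_m)$ and work with $\partial_i(-L_0)^{-1}$ there, whereas the paper stays on the abstract Wiener space and uses the orthonormality of the $\beta_{j,x}$'s to write $T_{i_1,\dots,i_k}G(\xi(x))=\langle(-DL^{-1})^kG(\xi(x)),\,\beta_{i_1,x}\otimes\cdots\otimes\beta_{i_k,x}\rangle_{\HH^{\otimes k}}$ before applying Meyer directly; the underlying mechanism is identical.
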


\begin{proof}
  Because $\langle \beta_{i,x} , \beta_{j,x} \rangle_{\HH} = \delta_{ij}$,   using (\ref{eq1}),  we can write for any $x\in \R^n$,
\[
T_{i_1, \dots, i_k} G(\xi(x)) =  \langle (DL^{-1} )^kG(\xi(x)) , \beta_{i_1,x} \otimes \cdots \otimes \beta_{i_k, x } \rangle_{\HH^{\otimes k}}.
\]
Then, by Meyer inequalities, which imply the equivalence in $L^p$ of the operators $D$ and $(-L)^{1/2}$, we can estimate the $\mathbb{D}^{k,p}$-norm of $T_{i_1, \dots, i_k} G(\xi(x)) $ by a constant times the $L^p(\Omega)$-norm of $G(\xi(x))$.
\end{proof}

Let $s_i = sy_i^{1/n}$ and $S_i = [-s_i, s_i]^n$ for i=1,2. We now have
\begin{align*}
&  \|Z_{s,y_2} - Z_{s,y_1} \|_{L^p(\Omega)} \\
& \qquad = \frac 1{ (2s)^{n/2}} \left\| \int_{S_2 \backslash S_1} G(\xi(x)) dx \right\|_{L^p(\Omega)} \\
& \qquad =   \frac 1{ (2s)^{n/2}} \left\| \int_{S_2 \backslash S_1}  
 \delta^d \left(  \sum_{ i_1, \dots, i_d=1} ^m  T_{i_1, \dots, i_d} G(\xi(x))  \beta_{i_1,x} \otimes \cdots \otimes \beta_{i_d,x} \right)  dx \right\|_{L^p(\Omega)} \\
 &\qquad = \frac 1{ (2s)^{n/2}} \left\|  \delta ^d \left(   
  \sum_{ i_1, \dots, i_d=1} ^m  \int_{S_2 \backslash S_1}   T_{i_1, \dots, i_d} G(\xi(x))  \beta_{i_1,x} \otimes \cdots \otimes \beta_{i_d,x} dx  \right)   \right\|_{L^p(\Omega)} .
\end{align*}
Applying Meyer inequalities (see (\ref{meyer1}), we obtain
\begin{align*}
&  \|Z_{s,y_2} - Z_{s,y_1} \|_{L^p(\Omega)} \\
& \qquad  \le C_{p,d}  \sum_{j=0}^d  \frac 1{ (2s)^{n/2}} \left\|   D^j \left(   
  \sum_{ i_1, \dots, i_d=1} ^m  \int_{S_2 \backslash S_1}   T_{i_1, \dots, i_d} G(\xi(x))  \beta_{i_1,x} \otimes \cdots \otimes \beta_{i_d,x} dx  \right)   \right\|_{L^p(\Omega; \HH^{\otimes (j+d)})} \\
  &  \qquad = C_{p,d}  \sum_{j=0}^d  \frac 1{ (2s)^{n/2}} \Bigg( \E  \Bigg|   \sum_{ i_1, \dots, i_d=1} ^m \sum_{ j_1, \dots, j_d=1} ^m  \int_{S_2 \backslash S_1}\int_{S_2 \backslash S_1}
    \langle  D^j  T_{i_1, \dots, i_d} G(\xi(x)) ,   D^j T_{i_1, \dots, i_d} G(\xi(x))  \rangle_{\HH^{\otimes j}} \\
    &  \qquad \qquad  \times  r_{i_1, j_1} (x-y) \cdots r_{i_d, j_d} (x-y) dxdy
    \Bigg | ^{p/2} \Bigg) ^{1/p}.
\end{align*}
Now, using Minkowski's inequality and the estimate  (\ref{ecu1}), we can write
\begin{align*}
&  \|Z_{s,y_2} - Z_{s,y_1} \|_{L^p(\Omega)} \\
& \le   C_{p,d}   \sup_{j=0,\dots, d} \sup_{x\in \R^n} \sup_{i_1,\dots, i_d =1 ,\dots, m}    \|  D^j  T_{i_1, \dots, i_d} G(\xi(x))\| _{L^{p/2}(\Omega; \HH^{\otimes j})}   \\
&\qquad  \times  \frac 1{ (2s)^{n/2}} \left(   \sum_{ i_1, \dots, i_d=1} ^m \sum_{ j_1, \dots, j_d=1} ^m  \int_{S_2 \backslash S_1}\int_{S_2 \backslash S_1}
 |  r_{i_1, j_1} (x-y) \cdots r_{i_d, j_d} (x-y)| dxdy
  \right) ^{1/2} \\
  &\qquad \le C s^{-n/2}   \sum_{i,j=1}^m   \left(      \int_{S_2 \backslash S_1}\int_{S_2 \backslash S_1}
   |  r_{i, j} (x-y)  |^d dxdy \right) ^{1/2} .
\end{align*}
Therefore, we finally obtain
$$
 \|Z_{s,y_2} - Z_{s,y_1} \|_{L^p(\Omega)}  \le C  |y_1-y_2| ^{1/2}  \sum_{i,j=1}^n \left( \int_{\R^n} | r_{i,j} (x) | ^d dx  \right)^{1/2}.
$$
\qed

\subsection{Proof of Lemma 3.5}
Recall that   $C = (c_{j,k})_{1\leq j,k \leq m}$ is the matrix given by (\ref{matrixC}).
For any $j\neq k$, we denote by $a^{(j,k)}$ the multiindex in $\mathcal{I}_2$ such that $a^{(j,k)}_j=1$,  $a^{(j,k)}_k=1$ and  $a^{(j,k)}_\ell=0$ for any $\ell \neq j,k$.  Also $a^{(j,j)}$ will denote the multiindex  in $\mathcal{I}_2$  such that $a^{(j,j)}_j=2$ and  $a^{(j,j)}_\ell=0$ for any $\ell \neq j$. Then,
\[
\mathcal{I}_2 = \{ a^{(j,k)}, 1\le j,k \le m\}.
\]
Moreover, from the definition of the matrix $C$, it follows that for any $j,k=1,\dots,m$,  $j\neq k$,  
\[
c(G, a^{(j,k)} )= c_{j,k}
\]
and for all $j=1,\dots, m$, $ c(G, a^{(j,j)} )=\frac 12 c_{j,j}$.
With this notation we can write
\begin{align*}
 V^{(2)} &= \int_{\mathbb{R}^n} \mathbb{E}[G_2(\xi(0))G_2(\xi(x))] dx \\
 &= \frac 14 \int_{\mathbb{R}^n} \sum_{i,j,k,\ell =1}^m c_{i,j} c_{\ell,k}  \E\left[ \overline{H}_{a^{(i,j)}} (\xi(0))   \overline{H}_{a^{(\ell,k)}} (\xi(x))  \right]dx.
\end{align*}

\medskip
The computation of the expectations $ \E\left[ \overline{H}_{a^{(i,j)}} (\xi(0))   \overline{H}_{a^{(\ell,k)}} (\xi(x))  \right]$ depends on the indexes $i,j, \ell,k$.
Consider the following cases:

\medskip
\noindent
{\it (i) Case $i\neq j$ and $\ell \neq k$}: In this case, we have
\begin{align*}
 \E\left[ \overline{H}_{a^{(i,j)}} (\xi(0))   \overline{H}_{a^{(\ell,k)}} (\xi(x))  \right]&=
  \E\left[  \xi_i(0) \xi_j(0) \xi_{\ell}(x) \xi_k(x)  \right] \\
  &= r_{i,\ell} (x) r_{j,k}(x) + r_{i,k}(x) r_{ j,\ell}(x).
  \end{align*}

\medskip
\noindent
{\it (ii) Case $i\neq j$ and $\ell = k$}: In this case, we have
\begin{align*}
 \E\left[ \overline{H}_{a^{(i,j)}} (\xi(0))   \overline{H}_{a^{(\ell,\ell)}} (\xi(x))  \right]&=
  \E\left[  \xi_i(0) \xi_j(0) (\xi^2_{\ell}(x) -1)  \right] \\
  &= 2r_{i,\ell} (x) r_{j,\ell}(x) .
  \end{align*}

\medskip
\noindent
{\it (iii) Case $i=j$ and $\ell = k$}: In this case, we have
\begin{align*}
 \E\left[ \overline{H}_{a^{(i,i)}} (\xi(0))   \overline{H}_{a^{(\ell,\ell)}} (\xi(x))  \right]&=
  \E\left[ ( \xi^2_i(0) -1)^2 (\xi^2_{\ell}(x) -1)  \right] \\
  &= 2r_{i,\ell} (x) ^2 .
  \end{align*}
  As a consequence, taking into account the symmetry of the matrix $C$, we obtain
  \[
  V^{(2)} 
 =\frac 12 \int_{\mathbb{R}^n} \sum_{i,j,k,\ell =1}^m c_{i,j} c_{\ell,k}  r_{i,\ell} (x) r_{j,k}(x) dx   = \frac 12  \int_{\mathbb{R}^n}  {\rm Tr} [r(x)C r(x)C]dx.
\]
This completes the proof of  Lemma \ref{lem3.5}. 
\qed

\medskip
Finally, we will show  formula  (\ref{3.5}), assuming that the covariances are integrable. To do this, it is convenient to choose a different underlying isonormal Gaussian process.
Let $W$ denote a complex Brownian measure on $\R^n$ and define the isonormal process $X$ on $L^2(\R^n)$ by
\begin{equation} \label{iso1}
X(f) = \int_{\R^n} \mathcal{F}[f](t) W(dt),
\end{equation}
where  $\mathcal{F}[f]$ denotes the Fourier transform of $f \in L^2(\R^n)$.  Recall the following properties of the Fourier transform:
\begin{equation} \label{fourierprop}
\int_{\mathbb{R}^n} f(t) \mathcal{F}[g](t)dt = \int_{\mathbb{R}^n} \mathcal{F}[f](t) g(t) dt,
\end{equation}
for $f,g \in L^2(\R^n)$ and   $\mathcal{F}[\mathcal{F}[f]](x) = (2\pi)^{-n} f(-x)$.

Due to the assumption that for $1 \leq i,j \leq m$, $r_{i,j} \in L^1(\R^n)$, we have that the spectral measures $\nu_j$s of $\xi_j$s are absolutely continuous with respect to the Lebesgue measure and hence $\xi_j$s admit spectral densities. Denoting the spectral density of $\xi_j$ as $f_j$, we have that the following representation holds (see equation 1.2.16 of \cite{ivanov_statistical_1989}).
\begin{equation}
\xi_j(x) = \int_{\mathbb{R}^n} \mathcal{F}[\alpha_j](t - x) \; dW(t),
\end{equation}
where $\alpha_j \in L^2(\mathbb{R}^n)$ are such that $|\alpha_j(t)|^2 = f_j(t)$. Denoting $\beta'_{j,x}(t) = e^{i\langle t,x\rangle} \alpha_j(t)$, we get that $\xi_j(x) = X(\beta'_{j,x})$ and so we have an "embedding" of the field into the isonormal process \eqref{iso1}. 
 Moreover, we have 
\begin{equation} \label{covrel}
r_{j,k}(x) = \mathbb{E}[\xi_j(x)\xi_k(0)] = \langle\beta'_{j,x}, \beta'_{k,0}\rangle_{L^2(\mathbb{R}^n)} = \mathcal{F}[\alpha_j \overbar{\alpha_k}](x).
\end{equation}
As a consequence, we can write
\begin{align*}
  V^{(2)} 
 &=\frac 12 \int_{\mathbb{R}^n} \sum_{i,j,k,\ell =1}^m c_{i,j} c_{\ell,k}  r_{i,\ell} (x) r_{j,k}(x) dx  \\
 &=
\frac 12 \int_{\mathbb{R}^n} \sum_{i,j,k,\ell =1}^m c_{i,j} c_{\ell,k}  \mathcal{F}[\alpha_i \overbar{\alpha_\ell}](x)\mathcal{F}[\alpha_j \overbar{\alpha_k}](x)dx.
\end{align*}
By  Plancherel's theorem, 
\begin{align*}
\int_{\mathbb{R}^n} \mathcal{F}[\alpha_i \overbar{\alpha_\ell}](x)\mathcal{F}[\alpha_j \overbar{\alpha_k}](x)dx &=
\int_{\mathbb{R}^n} \mathcal{F}[\alpha_i \overbar{\alpha_\ell}](x) \overbar{\mathcal{F}[(\alpha_j \overbar{\alpha_k})\circ {\rm sign}]}(x)  dx \\
&=(2\pi)^{-n}\int_{\mathbb{R}^n} \alpha_i (x)\alpha_\ell(x) \alpha_j (-x)\alpha_k(-x)dx,
\end{align*}
where ${\rm sign}(x)=-x$. This
  implies
\[
 V^{(2)} 
= \frac  {(2\pi)^{-n}} 2 ||H||^2_{L^2(\mathbb{R}^n)},
\]
where
\begin{equation}
H(x) = \sum_{j,k=1}^m c_{j,k} \alpha_j(-x) \alpha_k(x)  = \alpha^T(-x)C\alpha(x).
\end{equation}
This completes the proof of (\ref{3.5}).
\qed

\medskip
 
\textbf{Acknowledgement.} The second author expresses his gratitude to Anindya Goswami for discussions. He acknowledges few discussions with Yogeshwaran D. and Sreekar Vadlamani.
 
\bibliographystyle{ieeetr}

\end{document}